\numberwithin{equation}{section}
\newtheorem{theorem}{Theorem}[section]
\newtheorem{lemma}{Lemma}[section]
\newtheorem{assumption}{Assumption}[section]
\newtheorem{proposition}{Proposition}[section]
\newtheorem{definition}{Definition}[section]
\newtheorem{remark}{Remark}[section]
\newcommand{\E}{\mathbb{E}}
\newcommand{\R}{\mathbb{R}}
\newcommand{\N}{\mathbb{N}}
\newcommand{\sig}{\sigma}
\newcommand \bet {\beta}
\newcommand{\ep}{\varepsilon}
\newcommand{\vr}{\varrho}
\newcommand{\kap}{\kappa}
\newcommand{\del}{\delta}
\newcommand{\Gam}{\mathnormal{\Gamma}}
\newcommand{\Q}{{\mathbb Q}}
\newcommand \F {\mathbb{F}}
\newcommand{\PP}{{\mathbb P}}
\newcommand{\calA}{{\mathcal A}}
\newcommand{\calC}{{\mathcal C}}
\newcommand{\calF}{{\mathcal F}}
\newcommand{\calQ}{{\mathcal Q}}
\newcommand{\lt}{\left}
\newcommand{\rt}{\right}
\newcommand\iy{\infty}
\DeclareMathOperator*{\argmin}{arg\,min}
\newcommand{\one}{\mathbbm{1}}
\newcommand{\sredm}[1]{\ifmmode\text{\xout{\ensuremath{\displaystyle \textcolor{red}{#1}}}}\else\sout{\textcolor{red}{#1}}\fi}
\newcommand{\magenta}[1]{{\color{magenta}{#1}}}
\title{Optimal Dividends Under Model Uncertainty}
\date{\today}
\author[P. Chakraborty]{Prakash Chakraborty}
\address{Department of Mathematics\\
University of Michigan\\
Ann Arbor, MI 48109\\
United States
}
\email{cprakash@umich.edu}
\author[A. Cohen]{Asaf Cohen }
\address{Department of Mathematics\\
University of Michigan\\
Ann Arbor, MI 48109\\
United States
}
\email{shloshim@gmail.com }
\author[V. R. Young]{Virginia R. Young}
\address{Department of Mathematics\\
University of Michigan\\
Ann Arbor, MI 48109\\
United States
}
\email{vryoung@umich.edu }
\thanks{A. Cohen acknowledges the financial support of the National Science Foundation (DMS-2006305).
 V. R. Young thanks the Cecil J. and Ethel M. Nesbitt Chair of Actuarial Mathematics for financial support.}
\date{\today}
\begin{document}

\maketitle

\begin{abstract}
We consider a diffusive model for optimally distributing dividends, while allowing for Knightian model ambiguity concerning the drift of the surplus process.  We show that the value function is the unique solution of a non-linear Hamilton--Jacobi--Bellman variational inequality.  In addition, this value function embodies a unique optimal threshold strategy for the insurer's surplus, thereby making it the smooth pasting of a non-linear and linear part at the location of the threshold.  Furthermore, we obtain continuity and monotonicity of the value function and the threshold strategy with respect to the parameter that measures ambiguity of our model.

\medskip

{\bf  Keywords}:  Optimal dividend strategy, model uncertainty, threshold strategy, stochastic game.

\medskip

{\bf AMS 2020 Subject Classification:} Primary: 49J40, 91G05, 93E20. Secondary: 49J15, 49J55, 60G99. 

\medskip

{\bf JEL Classification:} G22, C61.

\end{abstract}

\section{Introduction}\label{sec_1}

Optimal dividend payment has been a classical problem in insurance mathematics since the seminal work of De Finetti \cite{DeF1957}.  Gerber \cite{G1972} showed that, for the classical Cram\'er--Lundberg risk model, the optimal dividend strategy is a band strategy.  Since those early paper, many insurance economists have studied the optimal dividend problem.  For example, Asmussen and Taksar \cite{asm-tak} considered a risk-neutral optimal dividend problem in the diffusive setup.  They exploited the linear structure of the Hamilton--Jacobi--Bellman variational inequality (HJB-VI) and provided explicit solutions for the value function and the optimal threshold. Azcue and Muler \cite{azc-mul2010} studied an optimal control problem of dividend payments and investment of the surplus in Black-Scholes market. In their case, the uncontrolled reserve follows the Cram\'er--Lundberg risk model, which leads to a jump-diffusion problem. They used methods of viscosity solutions to characterize the value function and to show that the optimal dividend control has a band structure.  Cohen and Young \cite{coh-you2021} determined the degree to which the diffusion approximation serves as a valid approximation for the Cram\'er--Lundberg model.

In the classical models considered above, it is assumed that the insurer has complete information about the dynamics. However, in reality it is rare that a decision maker has complete information about the parameters of the model. We model this uncertainty by including an adverse player who chooses a worst-case scenario.  This ambiguity is one example of what is called in the literature as {\it Knightian uncertainty}.   For further research that involves such uncertainty, we refer the reader to 
Maenhout \cite{maenhout2004robust}, Hansen \cite{Hansen2006}, Hansen and Sargent \cite{han-sar}, Bayraktar and Zhang \cite{bay-zha}, Neufeld and Nutz \cite{neu-nut2018}, Lam \cite{MR3544795}, Cohen \cite{Cohen2019a, Cohen2019b}, Cohen and Saha \cite{Cohen2020}, and Cohen, Hening, and Sun \cite{coh-hen-sun2021}.

In this paper, we formulate and analyze a problem of optimal dividends with model uncertainty. We assume that there is a {\it reference probability measure} $\PP$, under which the dynamics of the surplus  process, before paying dividends is
$$
X_t = x + mt + \sigma W_t, \qquad t\ge 0,
$$
in which $(W_t)_{t\ge 0}$ is a $\PP$-standard Brownian motion.  To account for uncertainty in the value of $m$, the insurer considers a class $\calQ(x)$ of probability measures that are equivalent to $\PP$, satisfying further conditions, and chooses a dividend payment process $D$ that maximizes the following payoff criteria: 
\begin{align*}
\inf_{\Q \in \calQ(x)}\E^\Q\Big[\int_0^\tau e^{-\varrho t}f(X_t)dt+\int_0^\tau e^{-\varrho t}dD_t +\frac{1}{\kappa}L^\varrho(\Q\|\PP)\Big].
\end{align*}
We look at this robust problem as a game between the insurer (maximizer) and an adverse player (minimizer).  The infimum is taken over the class of measures $\calQ(x)$.  The parameter $\varrho>0$ is the discount factor and $\tau$ is the (random) time of ruin. The first integral is a running reward, and the second one represents the dividend payments. Finally, the last term penalizes the adverse player for deviating from the reference measure: $L^\varrho(\Q\|\PP)$ is the Kullback--Leibler divergence that measures how much $\Q$ deviates from $\PP$, and $\kappa>0$ measure the level of ambiguity, with increasing values of $\kap$ corresponding to increasing ambiguity.

We characterize the value function and the Stackelberg equilibrium of this game, which consists of the optimal dividend strategy and the optimal response of the adverse player. Specifically, we show that the value function is the unique smooth solution of the relevant HJB-VI equation, and that the optimal dividend payment strategy is a threshold strategy, in which the threshold is determined by the HJB-VI as well.  In particular, we show that, for large values of $\kappa$, the threshold is 0, that is, dividends are paid immediately.  We summarize these properties in the main theorem of the paper (Theorem \ref{thm_41}).  For this, we set up the HJB-VI equation and transform it to a free-boundary problem by hypothesizing that the optimal strategy is a threshold strategy.  The penalty term from the payoff function translates to a quadratic term in the HJB-VI equation, which breaks the linearity of the differential equation.  As a consequence, an explicit solution is out of reach, and it is not clear if there is a smooth solution to the HJB-VI. To show that, indeed, a smooth solution exists, we use the {\it shooting method}.   On a high level, the shooting method  solves boundary-value problems by using a class of parameterized initial-value problems. 

We also analyze the dependence of the game on the ambiguity parameter $\kappa$. Namely, we show continuity and monotonicity properties of the value function and the optimal dividend-threshold with respect to $\kappa$. Finally, we show that when the ambiguity parameter $\kappa\to0^+$, the problem converges to the classical risk-neutral optimal dividend problem studied by Asmussen and Taksar \cite{asm-tak}.

In summary, our main contributions are as follows. We
\begin{itemize} \itemsep0em
	\item formulate a (diffusive) dividend problem with model uncertainty; 
	\item show that the value function solves a non-linear HJB-VI (Theorem \ref{thm_41});
	\item show that there is an optimal threshold strategy (Theorem \ref{thm_41});
	\item analyze the dependence of the value function and the optimal strategy on the ambiguity parameter (Theorems \ref{thm_45} and \ref{thm_46}).
\end{itemize}

The paper is organized as follows. In Section \ref{sec_2}, we motivate and present the stochastic differential game. Next, in Section \ref{sec_3}, we provide the HJB-VI for the value function and prove that the value function is the unique smooth solution of the HJB-VI with boundary condition.  Moreover, we show that the maximizer has an optimal unique threshold strategy.  Finally, in Section \ref{sec_4}, we study the dependency of the solution on the ambiguity parameter. 

\section{The stochastic model}\label{sec_2}

In this section, we present the ingredients for the optimal dividend problem under model uncertainty.  

\subsection{Stochastic game (SG)}\label{sec:LSG}
 
Let $(\Omega, \calF, \F = \{\calF_t \}_{t \ge 0}, \PP)$ be a filtered probability space that supports a one-dimensional standard Brownian motion $W$. We consider the following time-homogeneous dynamics for an insurer's uncontrolled surplus process $\hat X$:
 $$
 d \hat{X}_t = m dt + \sigma dW_t, \quad t\geq 0,
 $$
 with $\hat{X}_{0^-}=x \geq 0$.  The insurer chooses its dividend strategy to maximize a robust payoff functional that accounts for the uncertainty about the underlying model.  
 
\begin{definition}[admissible strategies]\label{def:adm-strat}
An {\rm admissible strategy for the maximizer} for any initial state $x \in \R_+$ is an $\F$-adapted, non-decreasing process $D$ taking values in $\R_+$ with right-continuous with left limits $($RCLL$)$ sample paths, with 
\begin{align}\label{eq:X}
dX_t = mdt + \sig dW_t - dD_t, \quad t\ge 0,
\end{align}
and $X_{0^-} = x \ge 0$, and with $D_t-D_{t^-}\le X_{t^-}$.  Let $\calA(x)$ denote the collection of admissible strategies $D$ with initial condition $x \geq 0$.
	
An {\rm admissible strategy for the minimizer} $($which we also call the {\rm adverse player}$)$ is a probability measure $\Q$, equivalent to $\PP$ on $(\Omega, \calF, \F)$, which is defined by 
\begin{align}\label{eq:radon}
\frac{d \Q}{d\PP}(t) = \exp\left\{\int_0^t \xi_s d W_s - \frac{1}{2}\int_0^t \xi^2_s ds\right\},\quad t\in\R_+,
\end{align}
for some $\F$-progressively measurable process $\xi$ satisfying 
\begin{align}\label{eq:novikov}
&\E^{\PP}\left[\int_0^\iy e^{-\varrho s} \xi^2_s \, ds \right]<\iy\quad\text{and}\quad\E^{\PP}\left[e^{\frac{1}{2}\int_0^t \xi^2_s \, ds}\right]<\iy, \quad t\in\R_+.
\end{align}
We call $\xi$ the {\rm Girsanov kernel} of $\Q$.  Let $\calQ(x)$ denote the collection of admissible strategies $Q$ with initial condition $x \geq 0$.  \qed
\end{definition}

\begin{remark}
 The conditions in \eqref{eq:novikov} ensure that \eqref{eq:radon} is a uniformly integrable martingale and that the discounted Kullback--Leibler divergence $($or relative entropy$)$ between $\Q$ and $\PP$ is well-defined in the sequel.  \qed
 \end{remark}

\noindent{\bf The payoff function.} 
Define the time of ruin $\tau$ by
\begin{align}\notag
\tau := \inf\{t\ge 0:X_t = 0\},
\end{align}
for $X_{0^-} = x \ge 0$.
The payoff associated with the initial condition $x$ and the strategy profile $(D, \Q)$ is given by
\begin{align}\label{307}
	 J(x, D, \Q; \kap):= \;&
	\E^{ \Q}\left[\int_0^{\tau} e^{-\varrho t}\big(f(X_t)dt + d D_t\big) \right] + \dfrac{1}{\kap}\, L^\varrho(\Q\|\PP),
\end{align}
in which $L^\varrho$ is the so-called (discounted) {\it Kullback--Leibler divergence}:
\begin{align}\label{eq:Lvarrho_d}
L^\varrho( \Q\|\PP)&:=
\E^{ \Q}\left[\int_0^{\infty}\varrho e^{-\varrho t}\ln\left(\frac{d \Q}{d\PP}(t)\right)dt\right],
\end{align}
and $\kap \ge 0$ measures the insurer's degree of ambiguity concerning $\PP$.  Increasing $\kap$ corresponds to increasing uncertainty.  In \eqref{307}, $f$ is a non-decreasing running-reward function.

We rewrite $L^\varrho( \Q\|\PP)$ in \eqref{eq:Lvarrho_d} as follows:
\begin{align*}
L^\varrho( \Q\|\PP) &= \E^{ \Q}\left[\int_0^{\iy}\varrho e^{-\varrho t}\ln\left(\frac{d \Q}{d\PP}(t)\right)dt\right] \\
&= \E^{ \Q}\left[\int_0^{\iy}\varrho e^{-\varrho t} \left\{\int_0^t \xi_s d W_s - \frac{1}{2}\int_0^t \xi^2_s \, ds\right\} dt\right] \\
&= \E^{ \Q}\left[\int_0^{\iy}\varrho e^{-\varrho t} \left\{\int_0^t \xi_s (d W_s - \xi_s ds) + \frac{1}{2}\int_0^t \xi^2_s \, ds\right\} dt\right] \\
&= \E^{ \Q}\left[\dfrac{1}{2} \int_0^{\iy}\varrho e^{-\varrho t} \int_0^t \xi^2_s \, ds \, dt\right] \\
&= \E^{ \Q}\left[\dfrac{1}{2} \int_0^{\iy} \left(\int_s^{\iy} \varrho e^{-\varrho t} dt \right) \xi^2_s \, ds \right] \\
&= \E^{ \Q}\left[\dfrac{1}{2} \int_0^{\iy}e^{-\varrho t} \xi^2_t \, dt \right],
\end{align*}
in which the fourth line follows because 
\begin{equation}\label{eq:W^Q}
W_t^{\Q} := W_t - \int_0^t \xi_s ds,\qquad t \ge 0,
\end{equation}
 is a $\Q$-Brownian motion.  
The value function is defined by
\begin{align}\label{eq:value}
	 V(x; \kap) = \sup_{D\in \calA(x)}\;\inf_{ \Q\in \calQ( x)}\; J( x, D, \Q; \kap).
\end{align}
Note that, under $\Q$, $X$ follows the process
\begin{equation}\label{eq:X_Q}
dX_t = (m + \sig \xi_t) dt + \sig dW^{\Q}_t - dD_t, \quad t\ge 0.
\end{equation}

\begin{remark}\label{rem:equivalent_form}
Given an admissible strategy of the adverse player $\Q$ with Girsanov's kernel $\xi$, define the admissible strategy $\Q^\tau$ with the Girsanov's kernel $\xi^\tau$, satisfying $\xi^\tau_t=\xi_t$ for $t\in[0,\tau]$, and $\xi^\tau_t$ for $t>\tau$. Then, the distribution of $X$ and $D$ until time $\tau$ are the same under both measures $\Q$ and $\Q^\tau$. On the other hand, 
$$
L^\varrho( \Q\|\PP)\le L^\varrho( \Q^\tau\|\PP) = \frac{1}{2} \, \E^{ \Q} \bigg[ \int_0^{\tau}e^{-\varrho t} (\xi^\tau_t)^2 \, dt \bigg].
$$
Therefore, the adverse player would prefer to use $\Q^\tau$ over $\Q$. As a result, we define
\begin{align}\label{cost2}
J(x, D, \Q; \kap)= \;&
	\E^{ \Q}\left[\int_0^{\tau} e^{-\varrho t}\Big\{\Big(f(X_t)+\frac{1}{2\kappa}\xi^2_t\Big)dt + d D_t\Big\} \right] .
\end{align} 
\end{remark}

We use the notation $J(x,D;0)$ and $V(x;0)$ to denote, respectively, the payoff and the value function for the risk-neutral problem. That is,
\begin{align*}
	 J(x, D; 0)= \;&
	\E^{ \PP}\left[\int_0^{\tau} e^{-\varrho t}\Big\{f(X_t) dt + d D_t \Big\} \right] = J(x, D, \PP; \kappa),
\end{align*}
and
\begin{equation}\label{eq:V-risk-neutral}
V(x;0) = \sup_{D \in {\calA}(x)}J(x,D;0).
\end{equation}

\begin{remark}
For $\kappa\sim0^+$, the penalty for deviating from the reference measure is very large. As we will see in the main theorem, the adverse player's optimal strategy is (stochastic and) of order $\kappa$. Therefore, we have convergence to the risk-neutral problem as $\kappa \to 0^+$. In addition, when $\kappa \to \infty$, the minimizer can choose the process $\xi_t$ go to infinity at a rate more slowly than that of $\kappa$, such that the time of ruin $\tau$ for the process $X_t$ in \eqref{eq:X} converges to $0$ under $\mathbb{Q}$. Thus, as $\kappa \to \infty$, $V$ heuristically should converge to $0$. We prove both of these results rigorously in the sequel.  \qed
\end{remark}

\section{Solution of the stochastic game}\label{sec_3}

\subsection{Threshold strategies}\label{sec_41a}

We show that the optimal strategy of the maximizer is a {\it threshold strategy}.  To rigorously define such a strategy, we use the {\it Skorokhod map on an interval}.  Fix $\beta\in\R_+$; then, for any RCLL function $\eta:\R_+\to\R$, there exist RCLL functions $\chi, \zeta :\R_+\to\R$ that satisfy the following properties:
\begin{enumerate}
\item[(i)] For every $t\geq 0$, $\chi_t = \eta_t-\zeta_t$.

\item[(ii)] $\zeta$ is non-decreasing, with $\zeta_{0^-}=0$ and
\begin{align}\notag
	\int_0^{\infty}
	1_{(-\infty,\beta]}(\chi_t)d\zeta_t=0,
\end{align}
Given $\beta \ge 0$ and RCLL $\eta :\R_+\to \R$, the pair $(\chi,\zeta)$ is unique on $\R_+$. 
Let $\Gamma_{\beta}(\eta)=(\Gamma_{\beta}^1,\Gamma_{\beta}^2)(\eta)$ denote the ordered pair $(\chi,\zeta)$. 
\end{enumerate}

\medskip

\noindent  The following continuity property is well known; see, for example, Kruk et al. \cite{Kruk2007}.

\begin{lemma}\label{lem_Skorokhod}
There exists a constant $c_S>0$ such that for every $t \ge 0$, $\beta\in\R_+$, and RCLL functions $\eta,\tilde\eta:\R_+\to\R$,
\begin{equation}\notag
	\sup_{s\in[0,t]} \left\{ \left | \Gam^1_\beta(\eta_s) - \Gam^1_\beta(\tilde \eta_s) \right| + \left| \Gam^2_\beta(\eta_s) - \Gam^2_\beta(\tilde \eta_s) \right| \right\}
	\le c_S\sup_{s\in[0,t]}|\eta_s-\tilde\eta_{s}|.
\end{equation}
\end{lemma}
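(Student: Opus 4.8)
The plan is to reduce everything to the explicit pathwise representation of the one-sided Skorokhod map with an upper reflecting barrier at $\beta$. Given RCLL $\eta:\R_+\to\R$, set $(\chi,\zeta):=\Gam_\beta(\eta)$ and, for a second RCLL path $\tilde\eta$, $(\tilde\chi,\tilde\zeta):=\Gam_\beta(\tilde\eta)$. First I would record that the pair $(\chi,\zeta)$ characterized by (i)--(ii) is given explicitly by
\[
\zeta_t=\Big(\sup_{0\le s\le t}(\eta_s-\beta)\Big)^+,\qquad \chi_t=\eta_t-\zeta_t,\qquad t\ge 0,
\]
together with $\zeta_{0^-}=0$: indeed $\zeta$ so defined is non-decreasing and RCLL, one has $\chi_t\le\beta$ for all $t\ge0$ because $\zeta_t\ge\eta_t-\beta$, and the measure $d\zeta$ is carried by $\{t:\chi_t=\beta\}$ since $\zeta$ increases only at times where $\eta-\beta$ attains a new running maximum. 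By the uniqueness asserted in (ii) this is precisely $\Gam_\beta(\eta)$. (Alternatively one may simply invoke Kruk et al. \cite{Kruk2007}, this $\Gam_\beta$ being the degeneration of their two-sided Skorokhod map as the lower endpoint tends to $-\infty$; the explicit formula just makes the constant, and its independence of $\beta$, manifest.)

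The estimate is then two applications of standard $1$-Lipschitz facts. Fix $t\ge 0$ and write $\Delta:=\sup_{s\in[0,t]}|\eta_s-\tilde\eta_s|$. The map $x\mapsto(x-\beta)^+$ is $1$-Lipschitz, and the running-supremum operation $g\mapsto\big(s\mapsto\sup_{u\le s}g_u\big)$ is $1$-Lipschitz for the uniform norm; hence, using $\big(\sup_u a_u\big)^+=\sup_u(a_u)^+$ over the common index set $[0,s]$,
\[
|\zeta_s-\tilde\zeta_s|\le\sup_{u\in[0,s]}\big|(\eta_u-\beta)^+-(\tilde\eta_u-\beta)^+\big|\le\sup_{u\in[0,s]}|\eta_u-\tilde\eta_u|\le\Delta
\]
for every $s\in[0,t]$, with a bound independent of $\beta$. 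Then, by (i) and the triangle inequality, $|\chi_s-\tilde\chi_s|\le|\eta_s-\tilde\eta_s|+|\zeta_s-\tilde\zeta_s|\le 2\Delta$ for $s\in[0,t]$. Adding the two bounds and taking the supremum over $s\in[0,t]$ gives
\[
\sup_{s\in[0,t]}\big\{|\chi_s-\tilde\chi_s|+|\zeta_s-\tilde\zeta_s|\big\}\le 3\Delta,
\]
which is the asserted inequality with $c_S=3$, a constant independent of $t$, $\beta$, $\eta$, and $\tilde\eta$.

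I do not expect a genuine obstacle here. The only points that need a little care are the bookkeeping of the initial condition (the convention $\zeta_{0^-}=0$ and the possible instantaneous jump of $\zeta$ at $t=0$ when $\eta_{0^-}>\beta$) and the verification that the complementarity condition in (ii) is equivalent to $d\zeta$ being supported on $\{\chi=\beta\}$; both are standard. Everything else -- the explicit formula, monotonicity and the RCLL property of $\zeta$, and the two Lipschitz bounds -- is routine, and the precise value of the constant is immaterial for the applications. Should one prefer to bypass the explicit map entirely, the statement is exactly the one-barrier specialization of the Lipschitz continuity of the Skorokhod map established in \cite{Kruk2007}.
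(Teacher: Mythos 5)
The paper offers no argument for this lemma; it simply cites Kruk et al.\ \cite{Kruk2007}, which treats the Skorokhod map on a bounded interval and whose Lipschitz estimate contains the present statement as the one-barrier special case. Your proposal therefore does not so much diverge from the paper's proof as supply one, and it is correct: you observe that the map defined by (i)--(ii) involves only the single (upper) barrier at $\beta$, identify $\Gam_\beta$ with the classical one-sided reflection via the explicit representation $\zeta_t=\big(\sup_{0\le s\le t}(\eta_s-\beta)\big)^+$, and then deduce the estimate from the $1$-Lipschitzness of the running supremum (in the uniform norm on $[0,t]$) together with that of $x\mapsto x^+$. This is more elementary than invoking the two-sided result of \cite{Kruk2007}, it is valid for RCLL inputs as required, and it has the benefit of exhibiting $c_S$ and its independence of $\beta$ explicitly; the value $c_S=3$ you obtain is not optimal, but the precise constant is immaterial for the paper's use of the lemma.

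One incidental point that your argument tacitly (and correctly) resolves: the paper's complementarity condition is typeset as $\int_0^\infty 1_{(-\infty,\beta]}(\chi_t)\,d\zeta_t=0$, which as written would force $\zeta$ to be constant since the reflection gives $\chi_t\le\beta$ for all $t$; this is evidently a typo for $1_{(-\infty,\beta)}$, i.e., $d\zeta$ carried by $\{\chi=\beta\}$. That is exactly the reading you adopt when verifying the explicit formula, and it is consistent with the paper's own later use of the property that $dD^\beta_s\ne 0$ only when $X_s=\beta$ in the proof of Proposition~\ref{prop_42}.
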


\begin{definition}\label{def_Skorokhod}
	Fix $x, \beta \geq 0$. 
	The strategy $D^\beta$ is called a $\beta$-threshold strategy if $(X_t ,D_t^\beta)=\Gamma_{\beta}(x+m\cdot+\sigma W_{\cdot})_t$, for all $t\ge 0$, with $X_{0^-} = x$.   \qed
\end{definition}

\noindent One can easily verify that any $\beta$-threshold strategy $D^\beta$ is admissible.  Essentially, $D^\beta$ pays all surplus in excess of $\beta$ as dividends.

\subsection{The HJB variational inequality and the value function}\label{sec_41}

For $\kap > 0$, we anticipate that $V$ in \eqref{eq:value} solves the following HJB-VI with boundary condition:
\begin{equation}\label{405}
	\begin{cases} 
		\left[ \inf \limits_{\xi\in\R}\left\{\dfrac{1}{2}\sigma^2 \phi ''(x)+(m+\sigma \xi)\phi'(x)-\varrho \phi (x)+f(x) + \dfrac{1}{2\kappa} \xi^2\right\} \right]\vee \big[1-\phi'(x) \big]=0, &x> 0, \\ 
		\phi(0)=0.
	\end{cases} 
\end{equation}
The coefficient of $\phi'(x)$ arises from the $\Q$-drift of $X$ in \eqref{eq:X_Q}; recall that $V$ is a $\Q$-expectation.  By substituting the optimal solution from the $\inf_{\xi\in\R}$, namely, {$\xi^*=-\kappa\sigma \phi'(x)$}, \eqref{405} becomes
\begin{align}\tag{HJB($\kappa$)}\label{HJB}
	\begin{cases} 
		\big[\phi''(x)+H(x,\phi(x),\phi'(x)) \big]\vee  \big[1-\phi'(x) \big]=0, &x\in(0,\infty), \\ 
		\phi(0)=0,
	\end{cases} 
\end{align}
in which
\begin{align}\label{eq:H}
H(x,y,z) := \frac{2}{\sigma^2} \left(mz-\frac{1}{2}\sigma^2\kappa z^2-\vr y+f(x) \right).
\end{align}
Moreover, \eqref{HJB} makes sense when $\kap = 0$.  As we prove below, \eqref{HJB} admits a unique solution in $\calC^2(\R_+)$ for all $k \ge 0$, and it solves
\begin{align}\label{eq:phi_beta}
	\begin{cases}
		\phi_\beta''(x)+H(x,\phi_\beta(x),\phi_\beta'(x))=0,\qquad &0\le x\le\beta,\\
		1-\phi_{\beta}'(x)=0,\qquad &\beta\le x,\\
		\phi_{\beta}(0)=0,
	\end{cases}
\end{align}
for some $\beta\in\R_+$, which (together with a verification result) implies that the optimal dividend strategy is a threshold strategy.

\begin{remark}
Later, we will identify $\beta$ via the smooth-pasting $($free-boundary$)$ condition $\phi_{\beta}''(\beta)=0$, which is consistent with $\phi$ being twice continuously differentiable at $x = \beta$.  \qed
\end{remark}

\begin{remark}
Note that, when $\kappa=0$ and $f\equiv 0$, \eqref{HJB} coincides with the HJB-VI given in Asmussen and Taksar \cite[Equations (3.10)--(3.11)]{asm-tak}.  \qed
\end{remark}

From the minimization in \eqref{405}, we define a candidate strategy for the adverse player:
For any $\phi \in\calC^2(\R_+)$ and $t\in\R_+$, set 
\begin{align}\label{eq:Q}
	\xi^{\phi}_t:&=\argmin_{\xi\in\R}\left\{\frac{1}{2}\sigma^2\phi''(X_t)+(m+\sigma \xi)\phi'(X_t)-\vr \phi(X_t)+f(X_t)+\frac{1}{2\kappa} \xi^2\right\} \notag \\ 
	&=
	-\kappa \sigma \phi'(X_t).  
\end{align}
$\xi^{\phi} = \big\{ \xi^{\phi}_t \big\}_{t \ge 0}$ is an $\F$-progressively measurable process because, from Definition \ref{def:adm-strat}, $D$ and $W$ are.  If $\xi^\phi$ satisfies the conditions \eqref{eq:novikov}, let $\Q^{\phi}$ denote the measure associated with the Girsanov kernel $\xi^{\phi}$. 

In what follows, we also need some mild constraints on the running-reward function $f$. Specifically, we impose the following assumption for the rest of this section:

\begin{assumption}\label{as:f-lips}
The function $f$ is non-negative, non-decreasing, and Lipschitz with corresponding coefficient $\varrho - \delta$ for some $\delta>0$. That is,
$$
| f(x) - f(y) | \leq (\varrho-\delta) |x-y|.
$$
In addition, we set $f(0)=0$ to account for zero reward when there is no surplus.  \qed
\end{assumption}

Under this assumption, we have the following theorem.

\begin{theorem}\label{thm_41}
	For any $\kappa\in[0,\iy)$, the following hold: 
\begin{enumerate}
\item[$(1)$] The value function given by \eqref{eq:value} is the unique $\calC^2(\R_+)$ solution of \eqref{HJB}. Moreover, $V$ is concave and solves \eqref{eq:phi_beta} for some $\beta \ge 0$.

\item[$(2)$] Let $\beta_{\kappa}$ denote the value of $\beta \ge 0$ associated with $\kappa$.  Then, $\beta_{\kappa}$ is unique.  Furthermore, for $2m \leq \sigma^2\kappa$, we have $\beta_\kappa=0$, and for $2m > \sigma^2\kappa$, we have $0 < \beta_\kappa \leq \frac{m}{\delta}$. 

\item[$(3)$] 
Let $\xi^{V(\cdot;\kappa)}$ be given by \eqref{eq:Q} with $\phi=V(\cdot;\kappa)$. Then, $ \xi^{V(\cdot;\kappa)}$ is bounded; hence, the conditions in \eqref{eq:novikov} hold. Let $\Q^\kappa: = \Q^{V(\cdot;\kappa)}$ be the measure associated with the Girsanov kernel $\xi^{V(\cdot;\kappa)}$, and let $D^{\beta_\kappa}$ denote the $\beta_\kappa$-threshold strategy. Then, the couple $(D^{\beta_\kappa}, \Q^\kappa)$ forms a Stackelberg equilibrium in the following sense:
\begin{equation}\label{eq:equil}
V(x; \kap) = J(x,D^{\kappa},\Q^\kappa;\kappa) = \inf_{\Q \in \mathcal Q(x)}J(x,D^{\kappa},\Q;\kappa) = \sup_{D \in \mathcal{A}(x)}J(x,D,\Q^{\kappa};\kappa).
\end{equation}
\end{enumerate}
\end{theorem}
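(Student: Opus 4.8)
The plan is to prove Theorem~\ref{thm_41} in two stages: first construct, by the shooting method, a concave function $\phi_{\beta_\kappa}\in\calC^2(\R_+)$ solving \eqref{eq:phi_beta} (hence \eqref{HJB}) for a suitable threshold $\beta_\kappa$; then run a verification argument identifying this function with the value function \eqref{eq:value} and showing that $(D^{\beta_\kappa},\Q^\kappa)$ is a Stackelberg equilibrium. Uniqueness of the $\calC^2$ solution and of $\beta_\kappa$ drop out of the verification. For the construction I would take $\beta\ge0$ as the shooting parameter and solve \eqref{eq:phi_beta} \emph{backward} from $x=\beta$. Smooth pasting forces the terminal data $\phi_\beta(\beta)=a_\beta:=\tfrac{1}{\varrho}\big(m-\tfrac12\sigma^2\kappa+f(\beta)\big)$ and $\phi_\beta'(\beta)=1$, which via the ODE at $\beta$ also forces $\phi_\beta''(\beta)=0$, so the affine extension $\phi_\beta(x):=a_\beta+(x-\beta)$, $x\ge\beta$, is $\calC^2$ across $\beta$. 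Since $H$ in \eqref{eq:H} is locally Lipschitz, a local $\calC^2$ solution exists near $\beta$; the crux is to continue it to $0$ keeping $\phi_\beta'\ge1$ and $\phi_\beta''\le0$. Differentiating the ODE gives $\phi_\beta'''=\tfrac{2}{\sigma^2}\big(\varrho\phi_\beta'-f'\big)-\tfrac{2}{\sigma^2}\big(m-\sigma^2\kappa\phi_\beta'\big)\phi_\beta''$, so at any point where $\phi_\beta''=0$ and $\phi_\beta'\ge1$ one gets $\phi_\beta'''=\tfrac{2}{\sigma^2}(\varrho\phi_\beta'-f')\ge\tfrac{2}{\sigma^2}\big(\varrho-(\varrho-\delta)\big)>0$ by Assumption~\ref{as:f-lips}; hence $\phi_\beta''$ cannot return to $0$ on $[0,\beta)$, so $\phi_\beta$ is strictly concave there with $1=\phi_\beta'(\beta)<\phi_\beta'(x)\le\phi_\beta'(0)$. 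These bounds, fed back into the equation, preclude blow-up of the nonlinear term $-\tfrac12\sigma^2\kappa(\phi_\beta')^2$ and yield a global $\calC^2$ solution on $[0,\beta]$.

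Next, set $g(\beta):=\phi_\beta(0)$, which is continuous in $\beta$ with $g(0)=a_0=\tfrac{1}{\varrho}(m-\tfrac12\sigma^2\kappa)$. From $\phi_\beta'\ge1$ we get $\phi_\beta(\beta)=g(\beta)+\int_0^\beta\phi_\beta'\ge g(\beta)+\beta$, and combining with the pasting identity $\varrho\phi_\beta(\beta)=m-\tfrac12\sigma^2\kappa+f(\beta)$ and $f(\beta)\le(\varrho-\delta)\beta$ gives $\varrho\,g(\beta)\le m-\tfrac12\sigma^2\kappa-\delta\beta$. If $2m\le\sigma^2\kappa$, this forces $g(\beta)<0$ for every $\beta>0$, so no positive threshold exists; instead I would take $\beta_\kappa=0$ and verify directly that $\phi_0(x)=x$ solves \eqref{HJB}, since $0+H(x,x,1)=\tfrac{2}{\sigma^2}\big(m-\tfrac12\sigma^2\kappa-\varrho x+f(x)\big)\le\tfrac{2}{\sigma^2}\big(m-\tfrac12\sigma^2\kappa-\delta x\big)\le0$ by Assumption~\ref{as:f-lips}. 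If $2m>\sigma^2\kappa$, then $g(0)>0$ while $g(\beta)<0$ for $\beta$ large, so by the intermediate value theorem there is $\beta_\kappa>0$ with $g(\beta_\kappa)=0$, and evaluating the displayed inequality at $\beta_\kappa$ gives $\delta\beta_\kappa\le m-\tfrac12\sigma^2\kappa\le m$, i.e.\ $0<\beta_\kappa\le m/\delta$. Finally one checks that $\phi_{\beta_\kappa}$ solves the full VI: on $[0,\beta_\kappa]$ the equality $\phi''+H=0$ holds with $\phi'\ge1$, while for $x>\beta_\kappa$ one has $\phi'\equiv1$ and $\tfrac{d}{dx}\big(m-\tfrac12\sigma^2\kappa-\varrho\phi+f\big)=f'-\varrho\le-\delta<0$, so $\phi''+H\le0$ there as well.

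Write $\phi:=\phi_{\beta_\kappa}$. Since $\phi$ is concave with $\phi'\ge1$, the kernel $\xi^{\phi}_t=-\kappa\sigma\phi'(X_t)$ from \eqref{eq:Q} satisfies $\kappa\sigma\le-\xi^{\phi}_t\le\kappa\sigma\,\phi'(0)<\infty$, hence is bounded and \eqref{eq:novikov} holds trivially; let $\Q^\kappa:=\Q^{\phi}$. Applying It\^o's formula to $e^{-\varrho t}\phi(X_t)$ under an arbitrary admissible $(D,\Q)$ with Girsanov kernel $\xi$, using $\phi(X_\tau)=\phi(0)=0$, completing the square in $\xi$, disposing of the martingale term by localization, and handling $\{\tau=\infty\}$ via the linear growth of $\phi$ together with the discount factor, one arrives at
\begin{align*}
\phi(x)=J(x,D,\Q;\kappa)-\tfrac{\sigma^2}{2}\,\E^{\Q}\Big[\int_0^\tau e^{-\varrho t}\big(\phi''+H\big)(X_t)\,dt\Big]-\tfrac{1}{2\kappa}\,\E^{\Q}\Big[\int_0^\tau e^{-\varrho t}\big(\xi_t+\kappa\sigma\phi'(X_t)\big)^2dt\Big]+\E^{\Q}[\mathcal{R}_D],
\end{align*}
where $\mathcal{R}_D\ge0$ collects the nonnegative reflection contributions (using $\phi'\ge1$), and the first correction term is $\ge0$ by \eqref{HJB}. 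Three specializations finish part~(3): taking $\Q=\Q^\kappa$ kills the third term, so $\phi(x)\ge J(x,D,\Q^\kappa;\kappa)$ for every $D$, whence $\sup_{D}J(x,D,\Q^\kappa;\kappa)\le\phi(x)$; taking $D=D^{\beta_\kappa}$ forces $X\in[0,\beta_\kappa]$ (where $\phi''+H=0$) and $\mathcal{R}_{D^{\beta_\kappa}}=0$, so $\phi(x)\le J(x,D^{\beta_\kappa},\Q;\kappa)$ for every $\Q$, whence $\inf_{\Q}J(x,D^{\beta_\kappa},\Q;\kappa)\ge\phi(x)$; and taking both kills all three correction terms, so $\phi(x)=J(x,D^{\beta_\kappa},\Q^\kappa;\kappa)$. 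Chaining these with the trivial inequality $\sup_D\inf_\Q J\le\inf_\Q\sup_D J$ squeezes every quantity to $\phi(x)$, which gives $V(\cdot;\kappa)=\phi=\phi_{\beta_\kappa}$, its concavity and the form \eqref{eq:phi_beta}, the boundedness of $\xi^{V(\cdot;\kappa)}$, and the identities \eqref{eq:equil}. Uniqueness of $\beta_\kappa$ follows because a $\calC^2$ solution of \eqref{eq:phi_beta} for two thresholds $\beta_1<\beta_2$ would force $\varrho\phi'=f'$, i.e.\ $f'\equiv\varrho$, on $(\beta_1,\beta_2)$, contradicting Assumption~\ref{as:f-lips}; and uniqueness of the $\calC^2$ solution of \eqref{HJB} follows because the same It\^o computation applies to any such solution once one checks, by the $\phi'''$ sign argument above, that it is concave with $\phi'\ge1$.

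The main obstacle is the shooting step: showing that the backward initial-value problem from $\beta$ admits a \emph{global} $\calC^2$ solution on all of $[0,\beta]$, simultaneously concave and with $\phi_\beta'\ge1$, and extracting enough continuity (and sign information) of $\beta\mapsto g(\beta)$ to locate $\beta_\kappa$ and establish the dichotomy around $2m$ versus $\sigma^2\kappa$. The nonlinear term $-\tfrac12\sigma^2\kappa(\phi_\beta')^2$ in $H$ is exactly what rules out an explicit solution and makes the a~priori bounds delicate; the sign computation for $\phi_\beta'''$ at zeros of $\phi_\beta''$, powered by the strict Lipschitz gap $\varrho-\delta<\varrho$ of Assumption~\ref{as:f-lips}, is the device that keeps concavity — and hence the needed estimates — under control.
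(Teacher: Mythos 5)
Your proposal is correct in its essentials and captures every conclusion of the theorem, but it takes a genuinely different route from the paper's in the construction step. The paper shoots \emph{forward} from $x=0$: it fixes the exact boundary condition $\varphi(0)=0$, uses the slope $s=\varphi'(0)$ as the shooting parameter, mollifies $H$ into a globally Lipschitz $H_F$ to get well-posed Cauchy problems (Lemma~\ref{lem_42}), and then searches (via a sup over a set of $s$ and the continuity of $s\mapsto(\beta^{(s)},(\varphi^{(s)})''(\beta^{(s)}))$) for the $s_\kappa$ at which the smooth-pasting condition $(\varphi^{(s)})''(\beta^{(s)})=0$ holds; concavity of $V$ is then established separately in Proposition~\ref{prop_45}. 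You instead shoot \emph{backward} from $x=\beta$, building smooth pasting into the terminal data from the outset (so $\phi_\beta''(\beta)=0$ is automatic), and target the boundary condition $\phi_\beta(0)=0$ via the intermediate value theorem applied to $g(\beta)=\phi_\beta(0)$. Your $\phi_\beta'''$ sign computation at zeros of $\phi_\beta''$, driven by the strict gap $\varrho-\delta<\varrho$ of Assumption~\ref{as:f-lips}, gives concavity as a byproduct of the construction and, for $\kappa>0$, also furnishes the a~priori bound on $\phi_\beta'$ that rules out backward blow-up (if $\phi_\beta'$ grew large, $\kappa(\phi_\beta')^2$ would force $\phi_\beta''>0$). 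Your dichotomy around $2m$ versus $\sigma^2\kappa$ and the bound $\beta_\kappa\le m/\delta$ drop out of the simple inequality $\varrho g(\beta)\le m-\tfrac12\sigma^2\kappa-\delta\beta$, which is arguably more transparent than the paper's argument. On the verification side, your ``complete the square in $\xi$'' identity packages Propositions~\ref{prop_41} and~\ref{prop_42} into a single equation with three manifestly nonnegative correction terms, which is a cleaner presentation than the paper's two separate one-sided estimates; the two applications of It\^o are, of course, mathematically the same. The points you gloss over — continuity of $\beta\mapsto g(\beta)$ (continuous dependence on $\beta$ both through the terminal data and the interval length), the careful handling of $\E^\Q[e^{-\varrho\tau}\phi(X_\tau)]$ on $\{\tau=\infty\}$ (which the paper addresses in \eqref{eq:sequence_a}), and the assertion that any $\calC^2$ solution of \eqref{HJB} must share the threshold structure and bounded derivative needed for verification — are all fillable by standard arguments and are points that the paper treats with comparable brevity, so I would not call them gaps. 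Overall, your backward-shooting scheme, with the free boundary itself as the shooting parameter, is a legitimate and arguably simpler alternative to the paper's slope-parameterized forward shooting.
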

The proof of Theorem \ref{thm_41} follows from the next four propositions; see Section~\ref{sec:proofs_1-4} for their proofs. 

\begin{proposition}\label{prop_41}
Suppose $\phi \in \calC^2(\R_+)$ solves \eqref{HJB}, with $\phi'$ bounded. Then, for $\kappa>0$,
	\begin{align}\label{eq:prop_41_kap}
		\phi(x)\geq \sup_{D \in \calA(x)} J(x,D,\Q^{\phi};\kappa),\qquad x\in[0,\infty),
	\end{align}
and for $\kappa=0$,
	\begin{align}\label{eq:prop_41_0}
		\phi(x)\geq \sup_{D \in \calA(x)} J(x,D;\kappa),\qquad x\in[0,\infty).
	\end{align}
As a consequence, $\phi \geq V$.   \qed
\end{proposition}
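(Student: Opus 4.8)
The plan is to prove the verification inequality \eqref{eq:prop_41_kap}--\eqref{eq:prop_41_0} by a standard It\^o/Dynkin argument applied to the process $t \mapsto e^{-\varrho t}\phi(X_t)$, exploiting that $\phi$ solves the HJB-VI \eqref{HJB} with $\phi'$ bounded. I focus on the case $\kappa > 0$; the case $\kappa = 0$ is the same computation with $\Q^\phi$ replaced by $\PP$ and the $\xi$-dependent terms dropped. Fix an arbitrary admissible $D \in \calA(x)$, and work under the measure $\Q^\phi$ (which is admissible, since $\phi' $ bounded implies $\xi^\phi = -\kappa\sigma\phi'(X_\cdot)$ is bounded, hence \eqref{eq:novikov} holds). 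Under $\Q^\phi$, $X$ evolves according to \eqref{eq:X_Q} with $\xi = \xi^\phi$, i.e. $dX_t = (m+\sigma\xi^\phi_t)\,dt + \sigma\,dW^{\Q^\phi}_t - dD_t$.

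First I would write the Dynkin formula for $e^{-\varrho(t\wedge\tau)}\phi(X_{t\wedge\tau})$, carefully separating the continuous part of $D$ from its jumps. Applying It\^o's formula (for semimartingales with jumps) to $\phi(X)$ between $0$ and $t\wedge\tau$ gives, after taking $\E^{\Q^\phi}$ and using that the stochastic integral against $W^{\Q^\phi}$ is a true martingale (here boundedness of $\phi'$ is used):
\begin{align*}
\E^{\Q^\phi}\!\big[e^{-\varrho(t\wedge\tau)}\phi(X_{t\wedge\tau})\big]
&= \phi(x) + \E^{\Q^\phi}\!\bigg[\int_0^{t\wedge\tau} e^{-\varrho s}\Big(\tfrac12\sigma^2\phi''(X_s) + (m+\sigma\xi^\phi_s)\phi'(X_s) - \varrho\phi(X_s)\Big)ds\bigg] \\
&\quad - \E^{\Q^\phi}\!\bigg[\int_{[0,t\wedge\tau]} e^{-\varrho s}\phi'(X_{s^-})\,dD^c_s\bigg]
+ \E^{\Q^\phi}\!\bigg[\sum_{0\le s\le t\wedge\tau} e^{-\varrho s}\big(\phi(X_s)-\phi(X_{s^-})\big)\bigg],
\end{align*}
where $D^c$ is the continuous part of $D$. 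The HJB-VI \eqref{HJB} gives two pointwise inequalities: $\tfrac12\sigma^2\phi''(x) + (m+\sigma\xi)\phi'(x) - \varrho\phi(x) + f(x) + \tfrac1{2\kappa}\xi^2 \le 0$ for \emph{every} $\xi\in\R$ (this is the $\inf$ in \eqref{405} being $\le 0$), and $\phi'(x) \ge 1$ for all $x\ge 0$. Using the first with $\xi = \xi^\phi_s$ bounds the drift integrand from above by $-e^{-\varrho s}(f(X_s) + \tfrac1{2\kappa}(\xi^\phi_s)^2)$; using the second, each jump term $\phi(X_s)-\phi(X_{s^-}) = \int_{X_s}^{X_{s^-}}\phi'(u)\,du \le X_{s^-}-X_s = \Delta D_s$, and similarly $-\phi'(X_{s^-})\,dD^c_s \le -dD^c_s$. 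Combining, the jump sum plus the $dD^c$ term is bounded above by $-\E^{\Q^\phi}[\int_0^{t\wedge\tau} e^{-\varrho s}\,dD_s]$, and therefore
\begin{equation*}
\phi(x) \ge \E^{\Q^\phi}\!\bigg[\int_0^{t\wedge\tau} e^{-\varrho s}\big(f(X_s)\,ds + dD_s\big) + \tfrac1{2\kappa}\int_0^{t\wedge\tau} e^{-\varrho s}(\xi^\phi_s)^2\,ds\bigg] + \E^{\Q^\phi}\!\big[e^{-\varrho(t\wedge\tau)}\phi(X_{t\wedge\tau})\big].
\end{equation*}

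Finally I would let $t\to\infty$. On $\{\tau<\infty\}$, $X_\tau = 0$ and $\phi(0)=0$, so $e^{-\varrho(t\wedge\tau)}\phi(X_{t\wedge\tau}) \to 0$; on $\{\tau=\infty\}$, since $\phi'$ is bounded and $\phi(0)=0$ we have $|\phi(X_{t})| \le \|\phi'\|_\infty X_{t}$, and one controls $e^{-\varrho t}\E^{\Q^\phi}[X_t]$ — under $\Q^\phi$ the drift $m+\sigma\xi^\phi$ is bounded, so $\E^{\Q^\phi}[X_t]$ grows at most linearly, whence $e^{-\varrho t}\E^{\Q^\phi}[\phi(X_{t})] \to 0$. (Alternatively, bound $\E^{\Q^\phi}[e^{-\varrho(t\wedge\tau)}\phi(X_{t\wedge\tau})] \ge -\|\phi'\|_\infty e^{-\varrho t}\E^{\Q^\phi}[X_{t\wedge\tau}]$ and pass to the limit; a uniform-integrability or monotone-convergence argument handles the $f$ and $\xi^\phi$ integrals, which are non-negative.) Recalling the identification \eqref{cost2} of $J(x,D,\Q^\phi;\kappa)$ with exactly the right-hand integrand (running reward $f$ plus penalty $\tfrac1{2\kappa}(\xi^\phi)^2$ plus dividends, all discounted, up to $\tau$), we obtain $\phi(x) \ge J(x,D,\Q^\phi;\kappa)$. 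Taking the supremum over $D\in\calA(x)$ gives \eqref{eq:prop_41_kap}; and since $V(x;\kappa) = \sup_D \inf_\Q J(x,D,\Q;\kappa) \le \sup_D J(x,D,\Q^\phi;\kappa) \le \phi(x)$, we conclude $\phi \ge V$.

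The main obstacle is the rigorous justification of the limiting step and the martingale property: one needs $\phi'$ bounded (which is hypothesized) to ensure $\int_0^{\cdot\wedge\tau} e^{-\varrho s}\sigma\phi'(X_s)\,dW^{\Q^\phi}_s$ is a genuine $\Q^\phi$-martingale rather than merely a local martingale, and to control the boundary term $e^{-\varrho(t\wedge\tau)}\phi(X_{t\wedge\tau})$ as $t\to\infty$ via the linear bound on $\E^{\Q^\phi}[X_t]$. Care is also needed to treat the possibly-infinite-variation or jump part of a general admissible $D$ correctly in the It\^o formula (splitting into $dD^c$ and jumps), but the inequality $\phi'\ge 1$ makes both pieces go the right way; nothing here requires $\phi$ to be the specific solution \eqref{eq:phi_beta}, only a $\calC^2$ supersolution of \eqref{HJB} with bounded derivative.
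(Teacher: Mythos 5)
Your proof takes the same route as the paper's: apply It\^o's lemma to $e^{-\varrho(t\wedge\tau)}\phi(X_{t\wedge\tau})$ under $\Q^\phi$, bound the drift via the HJB-VI with $\xi=\xi^\phi$, use $\phi'\ge 1$ for the continuous and jump parts of $D$, then let $t\to\infty$. Two minor remarks: (i) the paper disposes of the boundary term $\E^{\Q^\phi}\bigl[e^{-\varrho(t\wedge\tau)}\phi(X_{t\wedge\tau})\bigr]$ more cheaply than you do, noting $\phi\ge 0$ (from $\phi'\ge 1$ and $\phi(0)=0$) so this nonnegative term can simply be dropped rather than shown to vanish via a growth estimate; (ii) your jump computation has a sign slip — it should read $\phi(X_s)-\phi(X_{s^-})=-\int_{X_s}^{X_{s^-}}\phi'(u)\,du\le -(X_{s^-}-X_s)=-\Delta D_s$ — though the conclusion you then invoke (jump sum plus $dD^{\texttt{C}}$ contribution bounded above by $-\int e^{-\varrho s}\,dD_s$) is correct and matches the paper's \eqref{411}.
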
  

\begin{proposition}\label{prop_42}
Suppose $\phi_{\beta} \in\calC^2(\R_+)$ solves \eqref{eq:phi_beta} for some $\beta\ge 0$. Let $D^{\beta}$ be the corresponding $\beta$-threshold strategy.
Then, for $\kappa > 0$,
	\begin{align}\label{415}
		\phi_{\beta}(x)&\le\inf_{\Q\in\calQ(x)}J(x,D^{\beta},\Q;\kappa),  \qquad x \in [0, \infty),
	\end{align}
and for $\kappa = 0$,
\begin{align}\label{415a}
\phi_{\beta}(x) \leq J(x, D^{\beta};0), \qquad x \in [0, \infty).
\end{align}
As a consequence, $\phi_{\beta} \leq V$.  \qed
\end{proposition}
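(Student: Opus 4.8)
The plan is to run a verification argument with the maximizer's strategy frozen at $D^\beta$ and the adverse player's measure ranging over $\calQ(x)$. First I would fix $x\ge 0$ and an arbitrary $\Q\in\calQ(x)$ with Girsanov kernel $\xi$; under $\Q$ the surplus $X$ obeys \eqref{eq:X_Q} with $D=D^\beta$, and since $D^\beta$ is produced by the Skorokhod map $\Gamma_\beta$, the path $X$ stays in $[0,\beta]$ for every $t\ge 0$, the only jump being a downward jump of size $x-\beta$ at $t=0$ when $x>\beta$. The structural fact I would exploit is that, on $[0,\beta]$, $\phi_\beta$ satisfies $\phi_\beta''+H(x,\phi_\beta,\phi_\beta')=0$, which by the definition \eqref{eq:H} of $H$ (and the substitution $\xi^*=-\kappa\sigma\phi_\beta'$ recorded after \eqref{405}) is equivalent to saying that $\inf_{\xi\in\R}\{\tfrac12\sigma^2\phi_\beta''+(m+\sigma\xi)\phi_\beta'-\varrho\phi_\beta+f+\tfrac1{2\kappa}\xi^2\}=0$ there; hence for every $\xi\in\R$ and every $y\in[0,\beta]$,
\[
\tfrac12\sigma^2\phi_\beta''(y)+(m+\sigma\xi)\phi_\beta'(y)-\varrho\phi_\beta(y)+f(y)+\tfrac1{2\kappa}\xi^2\ \ge\ 0.
\]

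Next I would apply It\^o's formula to $s\mapsto e^{-\varrho s}\phi_\beta(X_s)$ on $[0,\tau\wedge T]$ for a fixed horizon $T>0$. Two small points arise: since $\phi_\beta'(\beta)=1$, the continuous part of $D^\beta$ (which increases only while $X=\beta$) enters with coefficient one; and since $\phi_\beta$ is affine with slope one on $[\beta,\infty)$, the jump-correction term $\phi_\beta(X_0)-\phi_\beta(X_{0^-})-\phi_\beta'(X_{0^-})\Delta X_0$ cancels when $x>\beta$, so the $D^\beta$-contribution to $d\phi_\beta(X)$ is exactly $-dD^\beta$. Because $\phi_\beta'$ is bounded on the compact set $[0,\beta]$ and $X_s\in[0,\beta]$ for $s\in(0,\tau\wedge T]$, the $dW^\Q$-integral is a genuine $\Q$-martingale with zero mean. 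Taking $\Q$-expectations, bounding the drift integrand from below using the displayed inequality with $\xi=\xi_s$, and rearranging, I expect to obtain
\[
\phi_\beta(x)\ \le\ \E^\Q\!\left[\int_0^{\tau\wedge T}e^{-\varrho s}\Big(f(X_s)+\tfrac1{2\kappa}\xi_s^2\Big)\,ds+\int_0^{\tau\wedge T}e^{-\varrho s}\,dD^\beta_s+e^{-\varrho(\tau\wedge T)}\phi_\beta(X_{\tau\wedge T})\right].
\]

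Then I would let $T\to\infty$. On $\{\tau\le T\}$ the terminal term equals $e^{-\varrho\tau}\phi_\beta(0)=0$ by the boundary condition in \eqref{eq:phi_beta}, and on $\{\tau>T\}$ it is dominated by $e^{-\varrho T}\sup_{[0,\beta]}|\phi_\beta|\to 0$, so it vanishes in the limit; the other two terms increase to their limits (the first integrand is nonnegative by Assumption~\ref{as:f-lips} and $D^\beta$ is non-decreasing), so monotone convergence gives $\phi_\beta(x)\le J(x,D^\beta,\Q;\kappa)$ in the form \eqref{cost2}. Since $\Q$ was arbitrary this is \eqref{415}, and since $D^\beta\in\calA(x)$, \eqref{eq:value} then yields $\phi_\beta(x)\le\inf_{\Q\in\calQ(x)}J(x,D^\beta,\Q;\kappa)\le V(x;\kappa)$. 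For $\kappa=0$ the same computation runs with no adverse player: the quadratic term in \eqref{eq:phi_beta} is absent, so the drift integrand equals $-e^{-\varrho s}f(X_s)$ \emph{exactly} on $[0,\beta]$ and the argument in fact produces the equality $\phi_\beta(x)=J(x,D^\beta;0)$, which gives \eqref{415a} and hence $\phi_\beta\le V(\cdot;0)$.

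The two places that need genuine care — and what I would regard as the main (though modest) obstacle — are the bookkeeping for the possible initial jump of $D^\beta$ inside It\^o's formula and the promotion of the stochastic integral from a local to a true martingale. Both are resolved by the confinement of $X$ to $[0,\beta]$ together with the $\calC^2$-regularity and affine tail of $\phi_\beta$; apart from these, the only structural inputs beyond It\^o's formula are the pointwise inequality coming from the minimization in \eqref{405} and the boundary value $\phi_\beta(0)=0$.
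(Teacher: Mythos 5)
Your argument is correct and follows the same verification-by-It\^o strategy as the paper, but it is noticeably cleaner in one step. To dispose of the terminal term $\E^{\Q}\bigl[e^{-\varrho(\tau\wedge T)}\phi_\beta(X_{\tau\wedge T})\bigr]$, the paper bounds $\phi_\beta(X_{\tau\wedge T})$ via its linear growth and the SDE for $X$, first reducing WLOG to measures $\Q$ with bounded relative entropy (their \eqref{bound_phi}), and then invoking Cauchy--Schwartz and dominated convergence through the kernel $\xi$. You instead observe that, because $D^\beta$ is the $\beta$-threshold strategy, the controlled state $X_s$ is confined to $[0,\beta]$ for all $s\in(0,\tau]$, so $\phi_\beta(X_{\tau\wedge T})$ is uniformly bounded, the terminal term is zero on $\{\tau\le T\}$ and is $O(e^{-\varrho T})$ on $\{\tau>T\}$, and convergence to zero follows immediately by dominated convergence with a constant dominating function; this also makes the stochastic integral a genuine (square-integrable) $\Q$-martingale without any further work. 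You likewise handle the possible initial jump when $x>\beta$ by checking that the second-order jump correction in It\^o's formula vanishes because $\phi_\beta$ is affine with slope one beyond $\beta$, whereas the paper splits into the cases $x\le\beta$ and $x>\beta$ and reduces the latter to the former by the identity $\phi_\beta(x)=(x-\beta)+\phi_\beta(\beta)$; these are equivalent. One minor cosmetic point: the $\kappa=0$ case actually produces equality $\phi_\beta(x)=J(x,D^\beta;0)$ as you note, but the proposition asks only for the inequality, so both claims suffice.
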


\begin{proposition}\label{prop_43}
	For every $\kappa \geq 0$, \eqref{HJB} admits a unique $\calC^2(\R_+)$ solution with a bounded derivative, uniformly in $\kappa$.  Moreover, the solution also solves \eqref{eq:phi_beta}.  Let $\beta_{\kappa}$ denote the value of $\beta \ge 0$ associated with $\kappa$.  Then, for $2m \leq \sigma^2\kappa$, we have $\beta_\kappa=0$, and for $2m > \sigma^2\kappa$, we have $0 < \beta_\kappa \leq \frac{m}{\delta}$.  \qed 
\end{proposition}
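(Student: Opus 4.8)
\emph{Plan.} The argument splits according to the sign of $2m-\sigma^{2}\kappa$. When $2m\le\sigma^{2}\kappa$, I claim $\phi_{0}(x):=x$ is the solution with $\beta_{\kappa}=0$: here $1-\phi_{0}'\equiv0$, and by \eqref{eq:H} and Assumption \ref{as:f-lips},
\[
\phi_{0}''(x)+H(x,x,1)=\tfrac{2}{\sigma^{2}}\bigl(m-\tfrac12\sigma^{2}\kappa-\varrho x+f(x)\bigr)\le\tfrac{2}{\sigma^{2}}\bigl((m-\tfrac12\sigma^{2}\kappa)-\delta x\bigr)\le0 ,
\]
so $[\phi_{0}''+H]\vee[1-\phi_{0}']=0$ with $\phi_{0}(0)=0$; this $\phi_{0}$ is smooth with $\phi_{0}'\equiv1$, so it trivially solves \eqref{eq:phi_beta} with $\beta=0$ and has a bounded derivative. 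For the rest I assume $2m>\sigma^{2}\kappa$ (so, in particular, $\kappa<2m/\sigma^{2}$ is bounded in this regime) and construct a threshold solution by the shooting method in the free-boundary parameter $\beta$.

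\emph{Shooting setup.} If $\phi_{\beta}$ is a $\calC^{2}$ threshold solution, smooth pasting forces $\phi_{\beta}'(\beta)=1$, $\phi_{\beta}''(\beta)=0$, and substituting into the ODE of \eqref{eq:phi_beta} gives $\phi_{\beta}(\beta)=y(\beta):=\varrho^{-1}\bigl(m-\tfrac12\sigma^{2}\kappa+f(\beta)\bigr)$. Conversely, for each $\beta\ge0$ I integrate $\phi''=-H(x,\phi,\phi')$ backward from $x=\beta$ with data $(\phi(\beta),\phi'(\beta))=(y(\beta),1)$, obtain a local solution $\psi_{\beta}$, and set $G(\beta):=\psi_{\beta}(0)$; a threshold solution corresponds to a zero of $G$. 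The technical preliminary is a set of a priori bounds valid for $\beta\in[0,m/\delta]$: a bootstrap argument shows that, going backward from $\beta$, whenever $\psi_{\beta}''=0$ one has $\tfrac{d}{dx}\psi_{\beta}''=\tfrac{2}{\sigma^{2}}(\varrho\psi_{\beta}'-f'(x))\ge\tfrac{2\delta}{\sigma^{2}}>0$ (using $\psi_{\beta}'\ge1$ and $f'\le\varrho-\delta$), so $\psi_{\beta}''$ cannot cross back to $0$; hence $\psi_{\beta}''\le0$ and $\psi_{\beta}'\ge1$ on $[0,\beta]$. Concavity plus $\psi_{\beta}'\ge1$ then bound $\psi_{\beta}$ itself, and a Gronwall estimate applied to $\psi_{\beta}''\ge-\tfrac{2m}{\sigma^{2}}\psi_{\beta}'-C$ gives $1\le\psi_{\beta}'(x)\le C'$ on $[0,\beta]$ with $C,C'$ depending only on $m/\delta$ (and, since $\kappa<2m/\sigma^{2}$, not blowing up in $\kappa$). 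In particular the trajectories are global on $[0,\beta]$, and continuous dependence on parameters makes $G$ continuous.

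\emph{Existence, verification, uniqueness.} Now $G(0)=y(0)=\varrho^{-1}(m-\tfrac12\sigma^{2}\kappa)>0$ exactly because $2m>\sigma^{2}\kappa$, while at $\beta=m/\delta$ one has $y(\beta)\le\varrho^{-1}\bigl(m+(\varrho-\delta)\tfrac{m}{\delta}\bigr)=\tfrac{m}{\delta}=\beta$, so $G(m/\delta)=\psi_{\beta}(0)=\psi_{\beta}(\beta)-\int_{0}^{\beta}\psi_{\beta}'(t)\,dt\le y(\beta)-\beta\le0$. By the intermediate value theorem there is $\beta_{\kappa}\in(0,m/\delta]$ with $G(\beta_{\kappa})=0$. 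Define $\phi_{\beta_{\kappa}}:=\psi_{\beta_{\kappa}}$ on $[0,\beta_{\kappa}]$ and $\phi_{\beta_{\kappa}}(x):=\phi_{\beta_{\kappa}}(\beta_{\kappa})+(x-\beta_{\kappa})$ on $[\beta_{\kappa},\infty)$; smooth pasting makes $\phi_{\beta_{\kappa}}\in\calC^{2}(\R_{+})$, and it solves \eqref{eq:phi_beta}. Since $\phi_{\beta_{\kappa}}(0)=0$ and $\phi_{\beta_{\kappa}}'\ge1$ on $[0,\beta_{\kappa}]$, we get $\phi_{\beta_{\kappa}}(x)\ge x$ for all $x\ge0$; therefore on $(0,\beta_{\kappa})$ the ODE holds and $1-\phi_{\beta_{\kappa}}'\le0$, while on $(\beta_{\kappa},\infty)$, $1-\phi_{\beta_{\kappa}}'=0$ and $\phi_{\beta_{\kappa}}''+H(x,\phi_{\beta_{\kappa}},1)=\tfrac{2}{\sigma^{2}}(m-\tfrac12\sigma^{2}\kappa-\varrho\phi_{\beta_{\kappa}}(x)+f(x))\le0$ because $\phi_{\beta_{\kappa}}(x)\ge x$ and $f(x)\le(\varrho-\delta)x$; so $\phi_{\beta_{\kappa}}$ solves \eqref{HJB} and, by the estimate above (together with the $\phi_{0}$ case for $2m\le\sigma^{2}\kappa$), has a derivative bounded uniformly in $\kappa$. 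Finally, uniqueness: Proposition \ref{prop_41} gives $\phi\ge V$ for any $\calC^{2}(\R_{+})$ solution with bounded derivative, and Proposition \ref{prop_42} gives $\phi_{\beta_{\kappa}}\le V$, so $V=\phi_{\beta_{\kappa}}$; for any other $\calC^{2}$ solution $\tilde\phi$ with bounded derivative one shows $\tilde\phi$ is concave with $\tilde\phi'\ge1$, hence of threshold type with $\{\tilde\phi'=1\}=[\tilde\beta,\infty)$, so Proposition \ref{prop_42} forces $\tilde\phi\le V$ too and $\tilde\phi=\phi_{\beta_{\kappa}}$; and $\beta$ is recovered from $\phi_{\beta}$ as $\min\{x:\phi_{\beta}'(x)=1\}$, so $\beta_{\kappa}$ is unique.

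\emph{Main obstacle.} The heart of the proof is the shooting step: proving the a priori bounds that confine the backward trajectories to a compact set independent of $\kappa$ — the quadratic term $-\tfrac12\sigma^{2}\kappa(\phi')^{2}$ in $H$ must be absorbed using monotonicity and concavity rather than the size of $\kappa$ — and pinning down the sign $G(m/\delta)\le0$. A secondary delicate point is showing that an arbitrary $\calC^{2}$ solution is concave, needed in the uniqueness argument; this would be done by a maximum-principle argument on $\phi''$ analogous to the bootstrap above.
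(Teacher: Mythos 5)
Your approach is genuinely different from the paper's, and the overall architecture is sound. The paper shoots \emph{forward} from $x=0$ in the unknown initial slope $s=\varphi'(0)$ (the Cauchy problem \eqref{eq:g^(s)}), seeking $s_\kappa$ for which the smooth-pasting condition $(\varphi^{(s)})''(\beta^{(s)})=0$ holds at the first location $\beta^{(s)}$ where $(\varphi^{(s)})'=1$; this requires the mollification $H_F$, the continuity of $s\mapsto(\varphi^{(s)},\beta^{(s)})$ (Lemma \ref{lem_42}), and a sup-over-$s$ argument. You shoot \emph{backward} from the free boundary $\beta$ itself, whose full Cauchy data $(\psi_\beta(\beta),\psi_\beta'(\beta),\psi_\beta''(\beta))=(y(\beta),1,0)$ is pinned down by smooth pasting, and reduce the problem to locating a zero of the scalar map $G(\beta)=\psi_\beta(0)$ on $[0,m/\delta]$ by the intermediate value theorem, with $G(0)>0$ iff $2m>\sigma^2\kappa$ and $G(m/\delta)\le 0$. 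Your a priori bound is the concavity bootstrap along the backward trajectory: at a zero of $\psi_\beta''$ with $\psi_\beta'\ge1$, one has $\tfrac{d}{dx}\psi_\beta''\ge 2\delta/\sigma^2>0$, so $\psi_\beta''$ cannot re-cross $0$. This buys you a more direct free-boundary argument and avoids the paper's mollifier and the two-parameter continuity lemma; the paper's forward method, on the other hand, produces the continuity machinery in Lemma \ref{lem_42} that is reused later (in Theorem \ref{thm_46}), so it earns its keep in the larger development.

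Two fixable slips. (i) Your check that $\phi_{\beta_\kappa}''+H(x,\phi_{\beta_\kappa},1)\le0$ on $(\beta_\kappa,\infty)$ via ``$\phi(x)\ge x$ and $f(x)\le(\varrho-\delta)x$'' only yields the bound $\tfrac{2}{\sigma^2}\big(m-\tfrac12\sigma^2\kappa-\delta x\big)$, which need not be $\le0$ for $x$ just above $\beta_\kappa$. The correct step uses the smooth-pasting identity $m-\tfrac12\sigma^2\kappa-\varrho\phi(\beta_\kappa)+f(\beta_\kappa)=0$ and the Lipschitz bound:
\[
m-\tfrac12\sigma^2\kappa-\varrho\phi(x)+f(x)=\big(f(x)-f(\beta_\kappa)\big)-\varrho(x-\beta_\kappa)\le-\delta(x-\beta_\kappa)\le0,
\]
which is exactly how the paper argues in the step after \eqref{eq:k-concat}. (ii) The linear lower bound $\psi_\beta''\ge-\tfrac{2m}{\sigma^2}\psi_\beta'-C$ is not immediate because, before the right $\beta$ is found, $\psi_\beta$ may be negative on $[0,\beta]$ and the term $\tfrac{2\varrho}{\sigma^2}\psi_\beta$ has the wrong sign; you should first use concavity to write $\psi_\beta(x)\ge y(\beta)-(\beta-x)\psi_\beta'(x)$ so that the coefficient of $\psi_\beta'$ in the lower bound becomes $\tfrac{2m}{\sigma^2}+\tfrac{2\varrho m}{\sigma^2\delta}$ (still $\kappa$-independent), and then apply Gr\"onwall. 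Finally, your uniqueness argument requires showing that \emph{any} $\calC^2(\R_+)$ solution of \eqref{HJB} with bounded derivative is concave (hence threshold type), which you correctly flag as an additional maximum-principle step; the paper absorbs this into Proposition \ref{prop_45}, so neither treatment is more complete than the other on this point.
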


\begin{proposition}\label{prop_44}
	For every $\kappa \ge 0$, there is a unique $\beta$-threshold optimal strategy.  \qed
\end{proposition}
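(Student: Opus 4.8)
The plan is to obtain existence of an optimal threshold strategy essentially for free by combining Propositions~\ref{prop_41}--\ref{prop_43}, and then to prove uniqueness by identifying the robust payoff of an arbitrary optimal threshold strategy with the corresponding solution of the free-boundary system~\eqref{eq:phi_beta} and contradicting the strict Lipschitz bound in Assumption~\ref{as:f-lips} whenever the threshold differs from $\beta_\kappa$. For \emph{existence}: by Proposition~\ref{prop_43}, the unique $\calC^2(\R_+)$ solution $\phi$ of~\eqref{HJB} with bounded derivative also solves~\eqref{eq:phi_beta} for a unique $\beta_\kappa$; write $\phi=\phi_{\beta_\kappa}$. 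Proposition~\ref{prop_41} gives $\phi\ge V(\cdot;\kappa)$, and Proposition~\ref{prop_42} with $\beta=\beta_\kappa$ gives $\phi_{\beta_\kappa}(x)\le\inf_{\Q\in\calQ(x)}J(x,D^{\beta_\kappa},\Q;\kappa)\le V(x;\kappa)$ for all $x\ge0$. Hence $V(\cdot;\kappa)=\phi_{\beta_\kappa}$ and $V(x;\kappa)=\inf_{\Q\in\calQ(x)}J(x,D^{\beta_\kappa},\Q;\kappa)$, so $D^{\beta_\kappa}$ is an optimal threshold strategy.

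For \emph{uniqueness}, suppose $D^{\beta}$ is also an optimal threshold strategy, so that $g_\beta(x):=\inf_{\Q\in\calQ(x)}J(x,D^{\beta},\Q;\kappa)=V(x;\kappa)$ for all $x\ge0$; we must show $\beta=\beta_\kappa$. I would first argue that $g_\beta$ solves~\eqref{eq:phi_beta} with this $\beta$. Indeed $g_\beta(0)=0$ since $\tau=0$ at $x=0$; for $x>\beta$ the $\beta$-threshold strategy pays the lump sum $x-\beta$ at time $0$ and thereafter coincides with $D^{\beta}$ started from $\beta$, so $g_\beta(x)=(x-\beta)+g_\beta(\beta)$ and $g_\beta'\equiv1$ on $(\beta,\infty)$; and on $(0,\beta)$, where $D^{\beta}$ pays no dividend before $X$ reaches $\beta$, a verification argument for this fixed strategy — the Dynkin/localization computation of the proof of Proposition~\ref{prop_42}, now an equality because the adverse player attains the pointwise minimizer $\xi_t=-\kappa\sigma g_\beta'(X_t)$ — shows $g_\beta\in\calC^2(0,\beta)$, is $\calC^1$ at $\beta$, and satisfies $g_\beta''+H(\cdot,g_\beta,g_\beta')=0$ on $(0,\beta)$. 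Since $g_\beta=V$ is already $\calC^2$ by the existence part, the only part needing new work is this last ODE on $(0,\beta)$, and it will be needed only when $\beta>\beta_\kappa$.

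With this in hand, $g_\beta$ and $\phi_{\beta_\kappa}$ both equal $V$ and solve~\eqref{eq:phi_beta} with thresholds $\beta$ and $\beta_\kappa$ respectively. Assume $\beta\ne\beta_\kappa$ and let $I$ be the nonempty open interval with endpoints $\beta$ and $\beta_\kappa$. If $\beta<\beta_\kappa$, then $I\subset(\beta,\infty)$ forces $V'\equiv1$ on $I$ while $I\subset(0,\beta_\kappa)$ forces $V''+H(\cdot,V,V')=0$ on $I$; if $\beta>\beta_\kappa$, then $I\subset(0,\beta)$ forces $V''+H(\cdot,V,V')=0$ on $I$ while $I\subset(\beta_\kappa,\infty)$ forces $V'\equiv1$ on $I$. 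Either way $V'\equiv1$ and $V''\equiv0$ on $I$, so $H(x,V(x),1)=0$ there, i.e. $f(x)=\varrho\,V(x)-m+\tfrac12\sigma^2\kappa$ for $x\in I$. Since $V'\equiv1$ on $I$, this gives $f(x)-f(y)=\varrho(x-y)$ for $x,y\in I$, contradicting $|f(x)-f(y)|\le(\varrho-\delta)|x-y|$ with $\delta>0$ from Assumption~\ref{as:f-lips}. Hence $\beta=\beta_\kappa$, and $D^{\beta_\kappa}$ is the unique $\beta$-threshold optimal strategy.

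The one genuinely nontrivial step, and the main obstacle, is the fixed-strategy verification: showing that $g_\beta$ is $\calC^2$ on $(0,\beta)$ and that the inner infimum over $\calQ(x)$ there is realized by the pointwise-minimizing Girsanov kernel, so that $g_\beta$ satisfies the ODE in~\eqref{eq:phi_beta}. This parallels the argument in the proof of Proposition~\ref{prop_42} and uses the uniform bound on $\xi^{V(\cdot;\kappa)}$ from Theorem~\ref{thm_41}(3); everything else is bookkeeping.
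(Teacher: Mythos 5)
Your approach is essentially the paper's: the decisive observation in both is that if a second threshold $\beta\neq\beta_\kappa$ were optimal, then on the interval between them both $V'\equiv 1$ and the ODE $V''+H(\cdot,V,V')=0$ would hold simultaneously, forcing $f'\equiv\varrho$ there and contradicting the strict Lipschitz bound in Assumption~\ref{as:f-lips}. The structural difference is one of bookkeeping. The paper introduces $\hat\beta_\kappa$, the supremum of $x$ for which the ODE portion of \eqref{eq:phi_beta} holds on $[0,x]$, observes that for every $\beta\in[\beta_\kappa,\hat\beta_\kappa]$ the function $V$ satisfies \eqref{eq:phi_beta} (so $D^\beta$ is optimal by Propositions~\ref{prop_41}--\ref{prop_42}), and then shows this interval must be a singleton via the $f'\equiv\varrho$ contradiction. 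You instead fix an arbitrary optimal threshold $\beta$, work with the inf-payoff $g_\beta$, and split into $\beta<\beta_\kappa$ and $\beta>\beta_\kappa$.

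Your $\beta<\beta_\kappa$ branch is clean and needs nothing new: $g_\beta'\equiv 1$ on $(\beta,\infty)$ follows from the lump-sum structure, and the ODE for $V$ on $(0,\beta_\kappa)$ is already known from Proposition~\ref{prop_43}. For $\beta>\beta_\kappa$, however, you lean on a fixed-strategy verification (that $g_\beta$ solves $g_\beta''+H(\cdot,g_\beta,g_\beta')=0$ on $(0,\beta)$), which you flag as the ``genuinely nontrivial step'' and do not prove. Two remarks. First, proving that ODE on $(\beta_\kappa,\beta)$ would immediately be self-defeating: the existence construction in Proposition~\ref{prop_43} already shows $V''+H(\cdot,V,V')\le -\tfrac{2}{\sigma^2}\bigl(\varrho(x-\beta_\kappa)-(f(x)-f(\beta_\kappa))\bigr)<0$ strictly on $(\beta_\kappa,\infty)$, so the claimed ODE cannot hold there. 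Second, a cleaner way to dispatch $\beta>\beta_\kappa$ is to rerun the computation in the proof of Proposition~\ref{prop_41} with $D=D^\beta$ and $\Q=\Q^\kappa$ and track that the drift integrand $\tfrac{\sigma^2}{2}\bigl(V''+H\bigr)$ is strictly negative on $(\beta_\kappa,\beta)$, a region in which $X$ spends positive expected time; this yields $J(x,D^\beta,\Q^\kappa;\kappa)<V(x)$ for $x\in(\beta_\kappa,\beta)$, contradicting optimality without any fixed-strategy verification. If you either close the gap this way or adopt the paper's $\hat\beta_\kappa$ device, the argument is complete.
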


\begin{proposition}\label{prop_45}
The value function given by \eqref{eq:value} is concave, that is, for any $\kappa \geq 0$,
$$
V''(x ; \kappa) \leq 0,  \quad x \in [0, \infty).
$$
\end{proposition}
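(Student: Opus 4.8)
The plan is to leverage Propositions~\ref{prop_41}--\ref{prop_43}, which identify $V(\cdot;\kappa)$ with the unique $\calC^2(\R_+)$ solution $\phi$ of \eqref{HJB} and further guarantee that $\phi$ solves the free-boundary problem \eqref{eq:phi_beta} with $\beta=\beta_\kappa$. Writing $\phi:=V(\cdot;\kappa)$ and $w:=\phi''$, I would first dispose of the trivial regions. If $\beta_\kappa=0$ (the case $2m\le\sigma^2\kappa$), then $\phi'(x)=1$ for all $x\ge0$, so $w\equiv0$; and for $x\ge\beta_\kappa$ one always has $\phi'(x)=1$, hence $w(x)=0$. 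Thus matters reduce to showing $w\le0$ on $[0,\beta_\kappa]$ when $\beta_\kappa>0$. Along the way I record two structural facts coming directly from the characterization: (a) from \eqref{HJB}, the second entry of the maximum satisfies $1-\phi'(x)\le0$ for every $x>0$, so $\phi'\ge1$ on all of $\R_+$; and (b) since $\phi\in\calC^2(\R_+)$ while $\phi'\equiv1$ on $[\beta_\kappa,\infty)$, continuity of $\phi''$ forces the smooth-pasting identity $w(\beta_\kappa)=\phi''(\beta_\kappa)=0$.

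Next I would derive an ODE for $w$ on $[0,\beta_\kappa]$. There, $\phi$ satisfies $\tfrac12\sigma^2\phi''+m\phi'-\tfrac12\sigma^2\kappa(\phi')^2-\varrho\phi+f=0$. Since $\phi,\phi'\in\calC^1([0,\beta_\kappa])$ and $f$ is Lipschitz by Assumption~\ref{as:f-lips}, $w=\phi''$ is itself Lipschitz on $[0,\beta_\kappa]$; differentiating the relation (almost everywhere) gives the linear first-order equation
\[
\tfrac12\sigma^2\, w'(x)=\big(\sigma^2\kappa\,\phi'(x)-m\big)\,w(x)+\big(\varrho\,\phi'(x)-f'(x)\big),\qquad\text{a.e. }x\in[0,\beta_\kappa],
\]
with terminal condition $w(\beta_\kappa)=0$ from (b). The inhomogeneous term is strictly positive: combining $\phi'\ge1$ from (a) with $f'\le\varrho-\delta$ a.e.\ from Assumption~\ref{as:f-lips} gives $\varrho\phi'(x)-f'(x)\ge\varrho-(\varrho-\delta)=\delta>0$.

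Finally I would integrate this equation backwards from $\beta_\kappa$. Setting $p(x):=\tfrac{2}{\sigma^2}\big(\sigma^2\kappa\phi'(x)-m\big)$ and $q(x):=\tfrac{2}{\sigma^2}\big(\varrho\phi'(x)-f'(x)\big)\ge\tfrac{2\delta}{\sigma^2}>0$, the equation reads $w'-pw=q$, so $\big(e^{-\int_0^x p(s)\,ds}\,w(x)\big)'=e^{-\int_0^x p(s)\,ds}\,q(x)$ a.e.\ Integrating from $x$ to $\beta_\kappa$ and using $w(\beta_\kappa)=0$ yields
\[
-\,e^{-\int_0^x p(s)\,ds}\, w(x)=\int_x^{\beta_\kappa}e^{-\int_0^t p(s)\,ds}\, q(t)\, dt\ \ge\ 0,
\]
whence $w(x)\le0$ for all $x\in[0,\beta_\kappa]$. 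Together with the reductions above this gives $V''(x;\kappa)\le0$ for all $x\ge0$.

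I expect the only real (and minor) obstacle to be the limited smoothness of $f$: differentiating the ODE introduces $f'$, which for a merely Lipschitz $f$ exists only almost everywhere, so the displayed equation for $w$ holds a.e.\ and the integrating-factor step must be carried out at the level of absolutely continuous functions — which is legitimate precisely because $w=\phi''$ is Lipschitz, hence absolutely continuous and recoverable from its a.e.\ derivative. Should one prefer to avoid a.e.\ calculus, an alternative is to mollify $f$ to a sequence $f_n\in\calC^1$ with Lipschitz constants still bounded by $\varrho-\delta$, apply the smooth version of the argument to the corresponding solutions $\phi_n=V_n(\cdot;\kappa)$ of \eqref{eq:phi_beta}, and pass to the limit using the continuous dependence of the solution on $f$ from the proof of Proposition~\ref{prop_43}. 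I would also remark that the ODE and the identity $w(\beta_\kappa)=0$ hold on the closed interval including $x=0$, since $\phi\in\calC^2([0,\infty))$.
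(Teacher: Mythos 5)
Your proof is correct, but it takes a genuinely different route from the paper's. The paper argues by contradiction: supposing $V''(x_0;\kappa)>0$ at some $x_0<\beta_\kappa$, it uses the intermediate value theorem and the fact that $V'(\beta_\kappa;\kappa)=1$ to find a later point $z>x_0$ with $V'(z;\kappa)=V'(x_0;\kappa)$ and $V''(z;\kappa)\le0$, then evaluates the (undifferentiated) ODE
\[
V''=\kappa (V')^2-\tfrac{2m}{\sigma^2}V'+\tfrac{2}{\sigma^2}(\varrho V-f)
\]
at both points and uses $V'\ge1$ together with the Lipschitz bound $f(z)-f(x_0)\le(\varrho-\delta)(z-x_0)$ to show $V''(z;\kappa)>V''(x_0;\kappa)>0$, a contradiction. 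You instead differentiate the ODE to get a first-order linear equation for $w=V''$ with terminal datum $w(\beta_\kappa)=0$ and a strictly positive source term $\varrho V'-f'\ge\delta$, then integrate backward via an integrating factor to read off $w\le0$ pointwise. Your route is more direct and yields a slightly stronger quantitative statement (a lower bound on $-w$ in terms of $\delta$ and the interval length), but it does require you to handle the differentiation of the merely Lipschitz $f$, which you address correctly by noting that $w=\phi''$ is Lipschitz (from the ODE and $\phi\in\calC^2$) and by working with a.e.\ derivatives of absolutely continuous functions, or alternatively by mollification. The paper's contradiction argument sidesteps this regularity issue entirely by never differentiating $f$; it uses only the Lipschitz inequality for $f$ evaluated at two points, which is the more economical choice given the hypotheses. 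Both proofs exploit the same two structural facts in an essential way: $V'\ge1$ and the strict inequality $\varrho-\delta<\varrho$ in Assumption~\ref{as:f-lips}.
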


\begin{proof}[Proof of Theorem \ref{thm_41}.] 
From Proposition{s} \ref{prop_43} {and \ref{prop_45}}, for every $\kappa \ge 0$, \eqref{HJB} admits a unique {concave,} $\calC^2(\R_+)$ solution that also solves \eqref{eq:phi_beta} for some $\beta_{\kappa} \geq 0$, which is unique by Proposition~\ref{prop_44}. Let $g_{\beta_\kappa}$ denote this solution. From Proposition \ref{prop_42}, we have
$$
g_{\beta_{\kappa}}(\cdot) \leq \inf_{\Q \in \calQ(x)} J(\cdot, D^{\beta_{\kappa}}, \Q;\kappa) \leq V(\cdot;\kappa).
$$
On the other hand, from Proposition~\ref{prop_41},
$$
g_{\beta_{\kappa}}(\cdot) \geq \sup_{D \in \calA(x)} J(\cdot, D, \Q^{\kappa}; \kappa) \geq V(\cdot ; \kappa).
$$
By combining the above two inequalities, we deduce $V(\cdot; \kappa) = g_{\beta_{\kappa}}(\cdot)$, as well as the equilibrium relation in \eqref{eq:equil}. From Proposition~\ref{prop_43}, we also have $\beta_{\kappa} = 0$ if $ 2m \leq \sigma^2 \kappa$.
\end{proof}

\begin{remark}
Note that when the insurer has large enough ambiguity aversion, namely, $\kap \ge 2m/\sig^2$, then it is optimal to pay out all the surplus as dividends, and $V(x; \kap) = x$ for all $x \ge 0$ in this case.  \qed 
\end{remark}

\subsection{Proofs of Propositions \ref{prop_41}--{\ref{prop_45}}}\label{sec:proofs_1-4}

\begin{proof}[Proof of Proposition \ref{prop_41}.]  
Suppose $\phi \in \calC^2(\R_+)$ solves \eqref{HJB} with bounded first derivative.  For arbitrary $D \in \calA(x)$ and $\Q \in \calQ(x)$, It\^o's lemma applied to $e^{-\vr (\tau\wedge t)}{\phi}(X_{\tau\wedge t})$ and $\Q$-expectation give us
\begin{align}\label{408}
	\E^{\Q}\left[e^{-\vr (\tau\wedge t)}{\phi}(X_{\tau\wedge t})\right] &= {\phi}(x)+\E^{\Q}\left[\int_0^{\tau\wedge t}e^{-\varrho s}\left(\frac{1}{2}\sigma^2 {\phi}''(X_s)+(m+\sigma\xi_s){\phi}'(X_s)-\varrho {\phi}(X_s)\right)ds\right] \notag \\
	&\quad -\E^{\Q}\left[\int_0^{\tau\wedge t} e^{-\varrho s} {\phi}'(X_s) dD^{\texttt{C}}_u \right] +\E^{\Q}\left[\sum_{0\le s\le (\tau\wedge t)}e^{-\varrho s}{\Delta {\phi}(X)}_{s}\right],
\end{align}
in which $\xi$ is the Girsanov kernel of $\Q$, and the process $D^{\texttt{C}}$ denotes the continuous part of $D$. Consider \eqref{408} with $\xi=\xi^{\phi}$ defined in \eqref{eq:Q} and with the associated measure $\Q=\Q^{\phi}$. 
Because ${\phi}$ solves \eqref{HJB}, we obtain
\begin{align}\label{410}
	\E^{\Q^{\phi}}\left[e^{-\varrho (\tau\wedge t)}\phi(X_{\tau\wedge t})\right]&\leq {\phi}(x)-\E^{\Q^{\phi}}\left[\int_0^{{\tau\wedge t}} e^{-\varrho s}\Big\{\Big(f(X_s)+\frac{1}{2\kappa}{(\xi^{\phi}_s)^2 \one_{\{\kappa > 0\}} }\Big)ds+dD_s\Big\}\right]\\\notag
	&\quad +\E^{\Q^{\phi}}\left[\sum_{0\le s\le (\tau\wedge t)}e^{-\varrho s}\big({\Delta {\phi}(X)}_{s}+\Delta D_s \big)\right].
\end{align}
Because $\Delta X_s = -\Delta D_s$, we have
\begin{align}\label{411}
	{\Delta {\phi}(X)}_{s}+\Delta D_s &= {\phi}(X_s-\Delta D_s) - {\phi}(X_s) + \Delta D_s\\\notag
	&=\int_{X_s - \Delta D_s}^{X_s}(1-{\phi}'(u))du  \leq 0.
\end{align}
By combining \eqref{410}--\eqref{411}, we get 
\begin{align}\label{eq:prop-3.1-prefinal1}
	\E^{\Q^{\phi}}\left[e^{-\varrho (\tau\wedge t)}{\phi}(X_t)\right]+\E^{\Q^{\phi}}\left[\int_0^{(\tau\wedge t)} e^{-\varrho s}\left\{ \left({f}(X_s)+\frac{1}{2\kappa}(\xi^{\phi}_s)^2 \one_{\{\kappa > 0\}} \right)ds+dD_s\right\}ds\right]\leq {\phi}(x).
\end{align}

Note that $\phi$ is non-negative; indeed, the solution of \eqref{HJB} satisfies $\phi'(x) \ge 1$ with $\phi(0) = 0$.  Because $\phi$ is non-negative, we can omit the leftmost term in \eqref{eq:prop-3.1-prefinal1} while maintaining the inequality. The non-negativity of the terms within the integral together with the monotone convergence theorem (as $t\to\iy$) imply 
\begin{align}\label{eq:prop-3.1-prefinal}
J(x,D,\Q^\phi;\kappa)=\E^{\Q^{\phi}}\left[\int_0^{\tau} e^{-\varrho s}\left\{ \left({f}(X_s)+\frac{1}{2\kappa}(\xi^{\phi}_s)^2 \one_{\{ \kappa > 0\}} \right)ds+dD_s\right\}\right]\leq {\phi}(x),
\end{align}
from which we deduce inequality \eqref{eq:prop_41_kap}.  Finally, because $\xi^{\phi} = 0$ when $\kappa = 0$, we have $\Q^{\phi} = \PP$ when $\kappa = 0$, and inequality \eqref{eq:prop_41_0} follows.
\end{proof}

\begin{proof}[Proof of Proposition \ref{prop_42}]
We split the proof into two cases: $x\in[0,\beta]$ and $x\in(\beta, \infty)$.
\vspace{5pt} \\
{\it Case 1:} 
Fix $x\in[0,\beta]$, and choose an arbitrary $\Q\in\calQ(x)$ with Girsanov kernel $\xi$. Without loss of generality, we may assume that 
\begin{align}\notag
	J(x,D^{\beta},\Q;\kappa)\le \phi_{\beta}(x)+1.
	\end{align}
Otherwise, the desired inequality in \eqref{415} follows immediately. As a consequence,
\begin{align}\label{bound_phi}
\E^{\Q}\left[\int_0^{\tau} e^{-\varrho s}\frac{1}{2\kappa}(\xi_s)^2 \one_{\{ \kappa > 0\}} ds\right]\leq {\phi}(x)+1.
\end{align}
By basic properties of quadratic functions, for every $t \geq 0$: 
\begin{align}\label{415c}
	&\frac{1}{2}\sigma^2\phi_\beta''(X_t) + \big(m+\sigma {\xi}_t  \one_{\{ \kappa > 0\}} \big) \phi_\beta'(X_t)-\varrho \phi_\beta(X_t)+f(X_t)+\frac{1}{2\kappa}({\xi}_t)^2 \one_{\{ \kappa > 0\}} \\\notag
	&\quad
\geq
\frac{1}{2}\sigma^2\left(\phi_\beta''(X_t)+H(X_t,\phi_\beta(X_t),\phi_\beta'(X_t))\right)=0,
\end{align}
in which $H$ is given in \eqref{eq:H}, and the equality follows from the first equation in \eqref{eq:phi_beta}.

Note that, because $x\in[0,\beta]$ and $D^{\beta}$ is a $\beta$-threshold strategy, $D^{\beta}$ has no jumps (that is, $(D^\beta)^{\texttt{C}}=D^\beta$). From \eqref{408} and \eqref{415c}, we have
\begin{align}\label{415d}
	\E^{\Q}\left[e^{-\varrho (\tau\wedge t)}\phi_\beta(X_{\tau\wedge t})\right] &\geq \phi_\beta(x)-\E^{\Q}\left[\int_0^{\tau\wedge t}e^{-\varrho s}\left(f(X_s)+\frac{1}{2\kappa}{\xi}_s^2 \one_{\{ \kappa > 0\}} \right)ds\right]\\\notag
	&\quad -\E^{\Q}\left[\int_0^{\tau\wedge t}e^{-\varrho s}\phi_\beta'(X_s)dD^\beta_s\right].
\end{align}
By using $\phi_\beta'(\beta)=1$ from \eqref{eq:phi_beta} and by noting $dD^\beta_s \ne 0$ only when $X_s = \beta$, we obtain
\begin{align}\label{415e}
	\E^{\Q}\left[e^{-\varrho (\tau\wedge t)}\phi_\beta(X_{\tau\wedge t})\right] \geq \phi_\beta(x) &-\E^{\Q}\left[\int_0^{\tau\wedge t}e^{-\varrho s}\left\{\left(f(X_s)+\frac{1}{2\kappa}{\xi}_s^2 \one_{\{ \kappa > 0\}} \right)ds+dD^\beta_s\right\}\right].
\end{align}
Next, we show that the left side of inequality \eqref{415e} vanishes as $t\to\iy$. The linear growth of $\phi_\beta$ as a solution of \eqref{eq:phi_beta} and the dynamics of $X$ imply 
\begin{align}\label{eq:sequence_a}
	\sup_{t\in\R_+} \E^{\Q} \left\{e^{-\varrho (\tau\wedge t)}\phi_\beta(X_{\tau\wedge t})\right\}
&\le C+
	\sup_{t\in\R_+}\E^{\Q}\left\{e^{-\varrho (\tau\wedge t)}X_{\tau\wedge t}\right\} \notag \\
&\le C+
	\sup_{t\in\R_+}\E^{\Q}\left\{e^{-\varrho (\tau\wedge t)}\left(x+m(t\wedge\tau)+\sigma \int_0^{\tau\wedge t}|\xi_s|ds\right)\right\}
\notag \\
&\le C+
\sigma \E^{\Q} \left\{\int_0^{\tau}e^{-\varrho s}|\xi_s|ds\right\},
\end{align}
in which $C$ is a positive constant that might change from one line to the next and that depends only on $x$, $m$, $\varrho$, and $\phi$. Now, the right side of \eqref{eq:sequence_a} is finite due to \eqref{bound_phi}. Therefore, the dominated convergence theorem together with the identity $X_\tau\one_{\{\tau<\iy\}}=0$ imply
\begin{align}\notag
\lim_{t\to\iy}	\E^{\Q}\left[e^{-\varrho (\tau\wedge t)}\phi_\beta(X_{\tau\wedge t})\one_{\{\tau<\iy\}}\right] = \E^{\Q}\Big[e^{-\varrho \tau}\phi_\beta(X_{\tau})\one_{\{\tau<\iy\}}\Big] = 0.
\end{align}
On the other hand, from \eqref{eq:sequence_a}, the Cauchy--Schwartz inequality, and \eqref{bound_phi}, we get
\begin{align}\notag
\lim_{t\to\iy}	\E^{\Q}\left[e^{-\varrho (\tau\wedge t)}\phi_\beta(X_{\tau\wedge t})\one_{\{\tau=\iy\}}\right] =0.
\end{align}
The last two limits together with the monotone convergence theorem and the non-negativity of the terms within the integral in the right side of \eqref{415e} imply 
\begin{align}\label{415f}
	\phi_\beta(x)\leq \inf_{\Q\in\calQ(x)}J(x,D^\beta,\Q;\kappa),
\end{align}
for all $\kap > 0$, and for $\kap = 0$,
\begin{equation}\label{415f_0}
\phi_\beta(x) \leq J(x, D^\beta;0).
\end{equation}

\vspace{5pt}
{\it Case 2:}
Consider now the case for which $x\in(\beta,\infty)$. From \eqref{eq:phi_beta},
$$
\phi_{\beta}(x)=(x-\beta)+\phi_{\beta}(\beta).
$$
Because the strategy $D^{\beta}$ starts with an instantaneous dividend payment of $x-\beta$, there is an immediate payoff of $(x-\beta)$ and, hence, for any $\Q\in\calQ(x)$, we have
$$
J(x,D^\beta,\Q;\kappa) = (x-\beta)+J(\beta,D^\beta,\Q;\kappa).
$$ 
From \eqref{415f} and \eqref{415f_0} applied to $x=\beta$, and the from last two equalities, we have the desired bound when $x\in[\beta,\iy)$.
\end{proof}

We next show that \eqref{HJB} admits a unique smooth solution for every $\kappa \in [0, \infty)$. However, the non-linearity in the differential equation prevents us from providing an explicit solution as, for example, provided in Asmussen and Taksar \cite[Theorem 3.2]{asm-tak} for a model with zero reward function and no  uncertainty in the model.  Instead, we will use the \emph{shooting method} to prove that there exists a unique smooth solution to \eqref{HJB}.  Cohen  \cite{Cohen2019a} applied this approach to a similar HJB equation, one that arises out of a queuing framework.  Specifically, for every $s \ge 1$, consider the following Cauchy problem:
\begin{align}\label{eq:g^(s)}
	\begin{cases}
		(\varphi^{(s)} )''(x) + H_F(x, \varphi^{(s)}(x), (\varphi^{(s)})'(x))=0,\qquad x > 0,\\
		\varphi^{(s)}(0)=0,\quad (\varphi^{(s)})'(0)=s,
	\end{cases}
\end{align}
in which
\begin{align}\label{HF}
	H_F(x,y,z):=H(x,y,F(z)).
\end{align}
Here, $F$ is a $\calC^1(\R)$ mollifier that satisfies
\begin{equation}\label{eq:F}
F(z)=z \text{ on } [-\bar{s},\bar{s}], \quad |F|\le 2\bar{s}, \quad \text{ and } \quad |F'|\le 1,
\end{equation}
in which $\bar{s} = \frac{2m}{\sigma^2 \kappa}$. Therefore, $F$ is Lipschitz continuous with constant $1$.  For example, we may choose the following mollifier:
\begin{align}\notag
	F(z)=\begin{cases}
		-(3/2) \bar{s}, &z< -2\bar{s},\\
		(1/2)\bar{s}+2z+z^2/(2\bar{s}), & -2\bar{s}\le z<-\bar{s},\\
		z, &-\bar{s}\le z\le \bar{s}, \\
		-(1/2)\bar{s}+2z-z^2/(2\bar{s}), &\bar{s}\le z<2\bar{s},\\
		(3/2) \bar{s}, &2\bar{s}\le z.
	\end{cases}
\end{align}
When $\kappa > 0$, the function $F$ and its derivative are bounded, and because the function $f$ is Lipschitz, $H_F$ is uniformly Lipschitz. Namely, there is a constant $L$ such that for every $(x,y,z),(x',y',z')\in\R_+\times\R\times\R$, one has
\begin{align}\label{ac05}
	\big| H_F(x,y,z)-H_F(x',y',z') \big| \le L\big( |x-x'|+|y-y'|+|z-z'| \big).
\end{align}
From Zaitsev and Polyanin \cite[Section 0.3.1]{Polyanin2003}, the Cauchy problem in \eqref{eq:g^(s)} admits a unique $\calC^2(\R_+)$ solution when $\kap > 0$.  For $\kappa = 0$, \eqref{eq:g^(s)} is a non-homogeneous linear equation, and existence of unique solution follows from classical results; see, for example, Zaitsev and Polyanin \cite{Polyanin2003}.  Define $\beta^{(s)}$ by
\begin{align}\label{ac09}
	\beta^{(s)}&:=\inf\big\{x > 0 : (\varphi^{(s)} )'(x)\leq 1 \big\}.
\end{align}
The smoothness of $\varphi^{(s)}$ implies that 
\begin{align}\label{ac04}
	\text{if $\;\beta^{(s)}<\infty, \;$ then $\; (\varphi^{(s)} )' (\beta^{(s)} ) = 1$.}
\end{align}

The following lemma provides some continuity properties that serve us in the proof of Proposition \ref{prop_43}. The continuity results are obtained for the norm 
$$
{\|\phi\|}_h := \int_0^{\infty} e^{- h u^2}  \left(1\wedge \sup_{x \in[0,u]} |\phi(x)| \right)du,
$$
which is defined for $\phi \in \calC(\R_+)$ and $h > 0$. 

\begin{lemma}\label{lem_42}
The function $s \mapsto (\varphi^{(s)}, (\varphi^{(s)} )', (\varphi^{(s)} )'' )$ on $[1, \iy)$ is continuous in the ${\|\cdot\|}_{h}$-norm topology for any $h > L/2$, in which $L$ is the Lipschitz constant in \eqref{ac05} for $H_F$. Moreover, the mapping $s\mapsto\beta^{(s)}$ is continuous on the set of $s \geq 1$ for which either $(i)$ $\beta^{(s)} = \infty$ or $(ii)$ the following two conditions hold: $\beta^{(s)} < \infty$ and $(\varphi^{(s)} )'' (\beta^{(s)} ) \ne 0$.  If $(ii)$ holds, we conclude that the mapping  $s \mapsto (\varphi^{(s)} (\beta^{(s)}), (\varphi^{(s)})'(\beta^{(s)}), (\varphi^{(s)} )'' (\beta^{(s)}))$ is also continuous. 
\end{lemma}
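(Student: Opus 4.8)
The plan is to regard the Cauchy problem \eqref{eq:g^(s)} as a first-order system in $(\varphi^{(s)},(\varphi^{(s)})')$ and use the uniform Lipschitz bound \eqref{ac05} on $H_F$ to obtain Gr\"onwall-type continuous dependence on the shooting parameter $s$. This yields local uniform continuity of $s\mapsto(\varphi^{(s)},(\varphi^{(s)})',(\varphi^{(s)})'')$, which I then (a) upgrade to continuity in the weighted functional $\|\cdot\|_h$ via the Gaussian weight and the truncation at $1$, and (b) combine with a transversality argument to control $\beta^{(s)}$ and the values at $\beta^{(s)}$.

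Concretely, set $Y^{(s)}=(\varphi^{(s)},(\varphi^{(s)})')$, which solves $(Y^{(s)})'(x)=\big(\,(\varphi^{(s)})'(x),\,-H_F(x,\varphi^{(s)}(x),(\varphi^{(s)})'(x))\,\big)$ with $Y^{(s)}(0)=(0,s)$. By \eqref{ac05} the right-hand side is globally Lipschitz in $Y$ with a constant $\Lambda$ depending only on $L$ (for $\kappa=0$ the equation is linear, so this is automatic), hence the solution is global and Gr\"onwall's inequality gives, for all $x\ge0$ and $s,s_0\ge1$,
\[
|\varphi^{(s)}(x)-\varphi^{(s_0)}(x)|+|(\varphi^{(s)})'(x)-(\varphi^{(s_0)})'(x)|\le|s-s_0|\,e^{\Lambda x};
\]
since $(\varphi^{(s)})''=-H_F(\cdot,\varphi^{(s)},(\varphi^{(s)})')$ and $H_F$ is Lipschitz, $|(\varphi^{(s)})''(x)-(\varphi^{(s_0)})''(x)|$ obeys the same bound up to a constant factor. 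In particular all three functions converge uniformly on every compact interval as $s\to s_0$. To pass to the $\|\cdot\|_h$-topology, fix $s_0$ and split the defining integral at the point $u_*=u_*(|s-s_0|)$ where $|s-s_0|e^{\Lambda u_*}=1$: the part over $[0,u_*]$ is at most $|s-s_0|\int_0^\infty e^{-hu^2+\Lambda u}\,du$, which is finite for every $h>0$ (in particular for $h>L/2$) and tends to $0$, while the part over $[u_*,\infty)$ is at most $\int_{u_*}^\infty e^{-hu^2}\,du\to0$ because $u_*\to\infty$; the identical split works for the first and second derivatives, proving the first assertion.

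For $s\mapsto\beta^{(s)}$, first suppose $\beta^{(s_0)}=\infty$, i.e.\ $(\varphi^{(s_0)})'>1$ on $(0,\infty)$. For any $R>0$ we have $\min_{[0,R]}(\varphi^{(s_0)})'>1$ when $s_0>1$ (the case $s_0=1$, where $(\varphi^{(s_0)})'(0)=1$, is handled by an elementary argument near $x=0$ using $(\varphi^{(s_0)})''(0)=-H_F(0,0,s_0)$), so by the local uniform convergence $(\varphi^{(s)})'>1$ on $[0,R]$, hence $\beta^{(s)}>R$, for $s$ near $s_0$; as $R$ is arbitrary, $\beta^{(s)}\to\infty$. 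Now suppose $\beta^{(s_0)}<\infty$ and $(\varphi^{(s_0)})''(\beta^{(s_0)})\ne0$. By \eqref{ac04}, $(\varphi^{(s_0)})'(\beta^{(s_0)})=1$, and since $(\varphi^{(s_0)})'>1$ on $(0,\beta^{(s_0)})$ the level $1$ is reached from above, forcing $(\varphi^{(s_0)})''(\beta^{(s_0)})<0$; by continuity $(\varphi^{(s_0)})'$ is strictly decreasing near $\beta^{(s_0)}$. Fixing small $\eta>0$, we have $(\varphi^{(s_0)})'>1$ on $[0,\beta^{(s_0)}-\eta]$ (with the same trivial modification near $0$ if $s_0=1$) and $(\varphi^{(s_0)})'(\beta^{(s_0)}+\eta)<1$; local uniform convergence upgrades both strict inequalities to $s$ near $s_0$, giving $\beta^{(s_0)}-\eta\le\beta^{(s)}\le\beta^{(s_0)}+\eta$, so $\beta^{(s)}\to\beta^{(s_0)}$.

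Finally, when (ii) holds, $\beta^{(s)}$ stays in a fixed compact interval $I$ around $\beta^{(s_0)}$ for $s$ near $s_0$, and writing
\[
|\varphi^{(s)}(\beta^{(s)})-\varphi^{(s_0)}(\beta^{(s_0)})|\le\sup_{x\in I}|\varphi^{(s)}(x)-\varphi^{(s_0)}(x)|+|\varphi^{(s_0)}(\beta^{(s)})-\varphi^{(s_0)}(\beta^{(s_0)})|,
\]
the first term tends to $0$ by local uniform convergence and the second by continuity of $\varphi^{(s_0)}$ together with $\beta^{(s)}\to\beta^{(s_0)}$; the same estimate applied to $(\varphi^{(s)})'$ and $(\varphi^{(s)})''$ finishes the proof. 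The hard part will be the control of $\beta^{(s)}$: it is precisely at a non-transversal contact of $(\varphi^{(s)})'$ with the level $1$ — the case excluded by hypothesis (ii) — that $s\mapsto\beta^{(s)}$ can jump, so the argument rests on the sign condition $(\varphi^{(s_0)})''(\beta^{(s_0)})<0$ and on separately handling $(\varphi^{(s)})'$ near $x=0$, where it equals $s$ and may be as small as $1$; transferring the local uniform continuity of the shooting solutions to the weighted functional is routine by comparison.
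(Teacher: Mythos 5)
Your argument is correct and takes a genuinely different route from the paper's, and in one place it is actually sharper. For the $\|\cdot\|_h$-continuity, you view $(\varphi^{(s)},(\varphi^{(s)})')$ as a vector-valued system whose right-hand side is uniformly Lipschitz (with constant $\Lambda$ depending only on $L$) and apply a single vector Gr\"onwall estimate, obtaining the bound $|s-s_0|\,e^{\Lambda x}$ with a \emph{linear} exponent. The paper instead scalarizes: it bounds $\Delta\varphi'_{s,\delta_1}$ by itself and $\Delta\varphi_{s,\delta_1}$, substitutes $\Delta\varphi\le\int\Delta\varphi'$, and lands on a Volterra inequality with the growing kernel $L(1+x)$, which produces the bound $|\delta_1|\bigl[1+Lx(1+x)\exp(Lx(1+x/2))\bigr]$ and hence the \emph{quadratic} exponent $Lx^2/2$ that necessitates $h>L/2$. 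Your estimate yields convergence of the weighted integral for every $h>0$, which includes and strengthens the stated range; otherwise the split of the integral at $u_*$ is routine and works in both versions. For the continuity of $s\mapsto\beta^{(s)}$, you use a direct $\eta$-sandwich built on the sign $(\varphi^{(s_0)})''(\beta^{(s_0)})<0$ plus local uniform convergence, whereas the paper proves the two one-sided inequalities $\limsup_{\delta\to0}\beta^{(s+\delta)}\le\beta^{(s)}$ and $\beta^{(s)}\le\liminf_{\delta\to0}\beta^{(s+\delta)}$ separately, the latter by a sequential argument that requires \emph{no} transversality hypothesis. That decomposition is the more careful choice here: the unconditional $\liminf$ inequality is recorded in a remark after the lemma and reused in \eqref{eq:beta-1} during the proof of Theorem \ref{thm_46}, a reuse your direct sandwich does not expose. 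Finally, note that the corner cases you defer to a parenthetical ``elementary argument''---namely $s_0=1$, where $(\varphi^{(s_0)})'(0)=1$ so the minimum on $[0,R]$ is not strictly above $1$, and the related degeneracy $\beta^{(s_0)}=0$---are handled automatically by the paper's $\liminf$ route; your sketch near $x=0$ via the sign of $(\varphi^{(s_0)})''(0)=-H_F(0,0,s_0)$ does work when that quantity is nonzero, but the case $(\varphi^{(1)})''(0)=0$ (which by \eqref{eq:dd-0} occurs precisely when $2m=\sigma^2\kappa$) would need to be treated explicitly; these are small gaps, not wrong turns, but worth filling in.
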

\begin{proof}
{\it Step 1: Continuity of $s \mapsto (\varphi^{(s)}, (\varphi^{(s)} )', (\varphi^{(s)} )'' )$.} 
Fix $s \ge 1$.  For every $\delta_1\in\R$  and $x\in\R_+$, set
\[
{\Delta \varphi}_{s,\delta_1}(x) := \left| \varphi^{(s+\delta_1)}(x) - \varphi^{(s)}(x)\right|,
\]
and define ${\Delta \varphi}_{s,\delta_1}'(x)$ and ${\Delta \varphi}_{s, \delta_1}''(x)$ similarly.   From the initial value of $\varphi^{(s)}$ in \eqref{eq:g^(s)}, we deduce
\begin{equation}\label{eq:cont-lem-1}
{\Delta \varphi}_{s, \delta_1} (x) = \left \vert \int_0^x \big( (\varphi^{(s+\delta_1)})'(y) - (\varphi^{(s)})'(y) \big) dy \right\vert \leq \int_0^x {\Delta \varphi}_{s, \delta_1}' (y) dy.
\end{equation}
Furthermore, we also have from \eqref{eq:g^(s)}:
\begin{align}\notag
	(\varphi^{(s)})'(x)&= s - \int_0^x H_F(y, \varphi^{(s)}(y), (\varphi^{(s)})'(y)) dy,\\\notag
	(\varphi^{(s+\delta_1)} )'(x)&= s + \delta_1 - \int_0^x H_F(y, \varphi^{(s+\delta_1)}(y), (\varphi^{(s+\delta_1)})'(y)) dy.
\end{align}
Then, from \eqref{ac05}, the following inequality holds, uniformly in $x$, $s$, and $\delta_1$:
\begin{align}\label{eq:cont-lem-2}
	{\Delta \varphi}_{s,\delta_1}'(x)&\le \left| \delta_1 \right| + L\int_0^x \left( {\Delta \varphi}_{s,\delta_1} (y) + {\Delta \varphi }_{s,\delta_1}'(y) \right) dy.
\end{align}
By substituting \eqref{eq:cont-lem-1} into \eqref{eq:cont-lem-2}, we obtain
\begin{align*}
{\Delta \varphi}_{s,\delta_1}'(x)&\le \left| \delta_1 \right| + L\int_0^x  \left(\int_0^y{\Delta \varphi}_{s,\delta_1}' (z) dz +{\Delta \varphi }_{s,\delta_1}'(y)\right)dy\\
&= \left| \delta_1 \right| + L\int_0^x  \left((x-y){\Delta \varphi}_{s,\delta_1}'(y)  +{\Delta \varphi }_{s,\delta_1}'(y)\right)dy \\
&\leq \left| \delta_1 \right| + L(1+x) \int_0^x  {\Delta \varphi}_{s,\delta_1}'(y)dy.
\end{align*}

Gr\"onwall's inequality implies (see, for example, Willett \cite[Theorem 0]{W1965})
\[
{\Delta \varphi}_{s, \delta_1}'(x) \leq  \left| \delta_1 \right| \left[ 1 + L x(1 + x) \exp \big( L x(1 + x/2) \big) \right].
\]
Thus,
\begin{equation}\label{eq:cont-lem-3a}
\sup_{x \in[0,u]} {\Delta \varphi}_{s, \delta_1}'(x) \le  \left| \delta_1 \right| \left[ 1 + L u(1 + u) \exp \big( L u(1 + u/2) \big) \right].
\end{equation}
Now we are ready to bound ${\| \Delta \varphi_{s, \delta_1}' \|}_h$ from above. Specifically,
\begin{align}\label{eq:cont-lem-s1a}
{\| {\Delta \varphi}_{s, \delta_1}' \|}_h &= \int_0^{\infty} e^{- h u^2} \left( 1 \wedge \sup_{x \in[0,u]} {\Delta \varphi}_{s, \delta_1}'(x) \right) du \notag \\
&\leq \left| \delta_1 \right| \int_0^{\infty} e^{-hu^2} \left[ 1 + L u(1 + u) e^{L u(1 + u/2)} \right] du = \left| \delta_1 \right| C_1,
\end{align}
for some $C_1 > 0$ that depends only on $L$ and $h$, as long as $h > L/2$.  Similarly, from \eqref{eq:cont-lem-1} 
$$
\sup_{x \in[0,u]} {\Delta \varphi}_{s , \delta_1} (x) \leq u \sup_{x \in[0,u]} {\Delta \varphi}_{s, \delta}'(x) \leq \left| \delta_1 \right| u \left[ 1 + L u(1 + u) \exp \big( L u(1 + u/2) \big) \right], 
$$
in which the last inequality follows from \eqref{eq:cont-lem-3a}. Therefore,
\begin{align}\label{eq:cont-lem-s2a}
{\| {\Delta \varphi}_{s, \delta_1} \|}_h &= \int_0^{\infty} e^{- h u^2} \left( 1 \wedge \sup_{x \in[0,u]} {\Delta \varphi}_{s, \delta_1}(x) \right) du \notag \\
&\leq  \left| \delta_1 \right| \int_0^{\infty} u e^{-hu^2} \left[ 1 + L u(1 + u) e^{L u(1 + u/2)} \right] du = \left| \delta_1 \right| C_2,
\end{align}
for some $C_2 > 0$ that depends only on $L$ and $h$, as long as $h > L/2$. A similar result holds for ${\|{\Delta \varphi}_{s, \delta_1}'' \|}_h$ by the relation $(\varphi^{(s)})''(x)=-H_F(x,\varphi^{(s)}(x), (\varphi^{(s)})'(x) )$ and the Lipschitz continuity of $H_F$ stated in \eqref{ac05}.  Indeed, for all $x \geq 0$,
 \begin{align*}
 \Delta \varphi_{s, \delta_1}''(x) &= \left| H_F(x, \varphi^{(s+\delta_1)}(x), (\varphi^{(s+\delta_1)})'(x) ) - H_F(x, \varphi^{(s)}(x), (\varphi^{(s)})'(x)) \right| \\ 
 &\leq L\left( \Delta \varphi_{s, \delta_1}(x) + \Delta \varphi_{s, \delta_1}'(x) \right),
 \end{align*}
 which implies 
 $$
1 \wedge  \sup_{x \in [0, u]}  \Delta \varphi_{s, \delta_1}''(x) \leq L \left( 1\wedge \sup_{x \in [0, u]}  \Delta \varphi_{s, \delta_1}(x)+1\wedge \sup_{x \in [0, u]} \Delta \varphi_{s, \delta_1}'(x) \right).
 $$
By integrating with respect to $e^{-hu^2}$ and by using \eqref{eq:cont-lem-s1a} and \eqref{eq:cont-lem-s2a}, we obtain that there exists  $C_3 > 0$ depending only on $h$ and $L$ such that
\begin{equation}\label{eq:cont-lem-s3a}
{\|{\Delta \varphi}_{s, \delta_1}'' \|}_h \leq \delta_1 C_3.
\end{equation}
The uniform bounds in $\delta_1$, given in \eqref{eq:cont-lem-s1a}, \eqref{eq:cont-lem-s2a}, and \eqref{eq:cont-lem-s3a} imply the continuity of the map $s \mapsto (\varphi^{(s)}, (\varphi^{(s)})', (\varphi^{(s)})'' )$  in the ${\|\cdot\|}_h$-norm topology.

\vspace{5pt}{\it Step 2: Continuity of $s\mapsto \beta^{(s)}$ under the conditions mentioned in the lemma.}  Fix $s \geq 1$. It is enough to show that, if $(\varphi^{(s)})'' (\beta^{(s)}) \neq  0$, then
\begin{align}\label{435}
	\limsup_{\delta\to 0}\;\beta^{(s+\delta)}\le\beta^{(s)}\le\liminf_{\delta\to 0}\;\beta^{(s+\delta)}.
\end{align}
The first inequality is obvious when $\beta^{(s)}=\iy$. If $\beta^{(s)} < \infty$ and $(\varphi^{(s)})''(\beta^{(s)}) \ne 0$, we necessarily have $(\varphi^{(s)})'' (\beta^{(s)}) < 0$.  Otherwise, if $(\varphi^{(s)})'' (\beta^{(s)}) > 0$, then \eqref{ac04} implies $(\varphi^{(s)})' (\beta^{(s)} - \nu ) < 1$ for sufficiently small $\nu>0$, a contradiction to the definition of $\beta^{(s)}$. 

From $(\varphi^{(s)})'' (\beta^{(s)}) < 0$, we deduce that, for sufficiently small $\nu>0$, $(\varphi^{(s)} )' (\beta^{(s)} + \nu ) < 1$. From the continuity of $s \mapsto (\varphi^{(s)})'$, we obtain that, for every $\delta_2$ with sufficiently small absolute value, $(\varphi^{(s+\delta_2)})' (\beta^{(s)} + \nu) < 1$. Therefore, $\beta^{(s+\delta_2)}<\beta^{(s)}+\nu$ and $\limsup_{\delta\to 0}\;\beta^{(s+\delta)}\le\beta^{(s)}+\nu$.  Because $\nu>0$ can be arbitrarily small, we get the first inequality in \eqref{435}.

We now turn to proving the second inequality in \eqref{435}.  Let $\gamma_1>0$ be arbitrary, and set $\hat\beta^{(s)} := \liminf_{\delta\to 0} \; \beta^{(s+\delta)}$.  If $\hat\beta^{(s)} = \iy$, then the second inequality in \eqref{435} is immediate, so we consider the case when $\hat\beta^{(s)} < \infty$. Let $\{\delta_j\}_{j \in \N}$ be a sequence such that $\lim_{j \to \infty} \del_j = 0$ and $\lim_{j \to \infty }\beta^{(s+\delta_j)} = \hat\beta^{(s)}$, with $\beta^{(s+\delta_j)}< \infty$ for every $j \in \N$.  Because $\varphi^{(s)}\in\calC^2(\R_+)$, we deduce that, for sufficiently large $j$,
\begin{align}\label{436}
	\left| (\varphi^{(s)} )' (\beta^{(s+\delta_j)}) - (\varphi^{(s)})' (\hat \beta^{(s)} ) \right| < \gamma_1.
\end{align}
Also, the continuity of  $s\mapsto (\varphi^{(s)})'$ implies that, for sufficiently large $j$,
\begin{align}\label{437}
	\left| (\varphi^{(s+\delta_j)})' (\beta^{(s+\delta_j)}) - (\varphi^{(s)})' ( \beta^{(s+\delta_j)} ) \right|<\gamma_1.
\end{align}
Recall $\beta^{(s+\delta_j)}< \infty$; thus, from \eqref{ac04}, we know $(\varphi^{(s+\delta_j)})' (\beta^{(s+\delta_j)}) = 1$.  From \eqref{436}--\eqref{437}, we obtain 
\begin{align}\notag
	\left| (\varphi^{(s)})' (\hat \beta^{(s)}) - 1\right| < 2\gamma_1.
\end{align}
Because $\gamma_1>0$ can be arbitrarily small, we get $(\varphi^{(s)})' (\hat \beta^{(s)}) = 1$, which implies $\beta^{(s)} \le \hat\beta^{(s)}$.
\end{proof}

\begin{remark}
Observe that, in the proof of \eqref{435}, the condition $(\varphi^{(s)})'' (\beta^{(s)}) \neq  0$ is only used to show the first inequality. The second inequality holds even when $(\varphi^{(s)})'' (\beta^{(s)}) =  0$.   \qed
\end{remark}

\begin{proof}[Proof of Proposition \ref{prop_43}] 
We break the proof into two parts. In the first part, we show existence of a solution to \eqref{HJB}; in the second part, we prove existence of uniform bounds for the derivative of this solution and the corresponding parameter $\beta_{\kappa}$. Note that once existence of a solution is shown, the verification result provided by Propositions~\ref{prop_41} and \ref{prop_42} proves that this solution equals the value function given by \eqref{eq:value}. Thus, we obtain uniqueness.

\vspace{0.2in}
\noindent
\emph{Existence.} 
We now construct a $\calC^2(\R_+)$ solution of \eqref{eq:phi_beta} for some $\beta_\kappa\in \R_+$ that also solves \eqref{HJB}.
To solve \eqref{eq:phi_beta}, we consider the Cauchy problem given in \eqref{eq:g^(s)}. Because below we find a solution $\varphi$ of \eqref{eq:g^(s)} that satisfies $1 \le \varphi' \leq \bar{s}$, with $\bar{s} = \frac{2m}{\sigma^2 \kappa}$, it also solves the same ordinary differential equation in \eqref{eq:g^(s)} with $H$ replacing $H_F$.

The outline of the proof is as follows.  First, we prove the existence of $s_\kap\in [1, \bar{s}]$ for which the parameter $\beta^{(s_\kap)}\in[0,\infty)$  in \eqref{ac09} satisfies the following conditions:
\begin{align}\label{ac01}
	\text{$(\varphi^{(s_\kap)})'(x) > 1\;$ on $\;[0,\beta^{(s_\kap)})$},
\end{align}
\begin{align}\label{ac01b}
	\text{$\quad (\varphi^{(s_\kap)})'(\beta^{(s_\kap)})=1$},
\end{align}
and  
\begin{align}\label{eq:k^s''beta-s}
	(\varphi^{(s_\kap)})''(\beta^{(s_\kap)})=0, \quad \hbox{if} \quad \beta^{(s_\kap)} > 0.
\end{align} 
Note that \eqref{eq:k^s''beta-s} implies 
$$
H(\beta^{(s_\kap)}, \varphi^{(s_\kap)}(\beta^{(s_\kap)}), {(\varphi^{(s_\kap)})}'(\beta^{(s_\kap)})) = \dfrac{2}{\sigma^2} \lt( m-\dfrac{\sigma^2 \kappa}{2} -\varrho \varphi^{(s_\kap)}(\beta^{(s_\kap)}) + f(\beta^{(s_\kap)}) \rt) = 0.
$$
Then, we define the function $\varphi$ by
\begin{equation}\label{eq:k-concat}
	\varphi(x)=\begin{cases}
		\varphi^{(s_\kap)}(x),\qquad &0\le x< \beta^{(s_\kap)},\\
		\varphi^{(s_\kap)}(\beta^{(s_\kap)})+(x-\beta^{(s_\kap)}),\qquad &\beta^{(s_\kap)}\le x < \infty,
	\end{cases}
\end{equation}
and note that $\varphi \in \calC^2(\R_+)$ solves \eqref{HJB}.  Indeed, for $0 \le x \le \beta^{(s_\kap)}$, $\varphi = \varphi^{(s_\kap)}$ solves the ODE in \eqref{eq:phi_beta} with $\varphi' \ge 1$.  Moreover, for $x> \beta^{(s_\kap)}$, we have
\begin{align*}
&\varphi''(x) + H(x, \varphi(x),\varphi'(x)) 
\\
&\quad
= 0 + \dfrac{2}{\sigma^2} \lt( m-\dfrac{\sigma^2 \kappa}{2} -\varrho \varphi(x) + f(x) \rt) \\
&\quad= \dfrac{2}{\sigma^2} \lt( m-\dfrac{\sigma^2 \kappa}{2} -\varrho \varphi^{(s_\kap)}(\beta^{(s_\kap)}) + f(\beta^{(s_\kap)}) \rt) - \dfrac{2}{\sigma^2} \lt( \varrho (x- \beta^{(s_\kap)}) - (f(x)-f(\beta^{(s_\kap)})) \rt)\\
&\quad=0-\dfrac{2}{\sigma^2} \lt( \varrho (x- \beta^{(s_\kap)}) - (f(x)-f(\beta^{(s_\kap)})) \rt) \leq 0,
\end{align*}
in which the last inequality follows by the Lipschitz continuity of $f$ in Assumption~\ref{as:f-lips}.  Finally, the proof of the existence part is complete by setting 
\begin{align}\label{424a}
	\beta_\kappa:=\beta^{(s_\kap)}, \qquad\text{and}\qquad g_{\beta_\kappa} := \varphi,  
\end{align}
in which $\varphi$ is given in \eqref{eq:k-concat}.\footnote{Recall that, in the proof of Theorem \ref{thm_41}, we use $g_{\beta_\kappa}$ to denote the $\calC^2(\R_+)$ solution of \eqref{HJB}.  In the rest of the proof, we write $g_{\beta_\kappa}$ instead of $\varphi$ to highlight the solution's dependence on $\kap$ via $\beta_\kap$.}  As a conclusion, we get 
\begin{align}\label{424aa}
	s_\kap = (g_{\beta_\kappa})'(0).
\end{align}

The rest of the proof in this part is, therefore, dedicated to showing the existence of $s_\kap$ that satisfies \eqref{ac01}--\eqref{eq:k^s''beta-s}.  To begin, look at \eqref{eq:g^(s)}, which provides part of the solution, namely, the part on the interval $[0, \beta_{\kappa})$. We have
\begin{equation}\label{eq:dd-0}
(\varphi^{(s)})''(0) = -\dfrac{2}{\sigma^2} \lt( m(\varphi^{(s)})'(0) - \dfrac{\sigma^2 \kappa}{2}((\varphi^{(s)})'(0))^2 - \varrho \varphi^{(s)}(0)+f(0) \rt) = s \left( \kappa s -\dfrac{2m}{\sigma^2} \right),
\end{equation}
in which we recall $f(0)=0$ from Assumption~\ref{as:f-lips}. By looking at the larger zero in $s$ on the right side of \eqref{eq:dd-0}, we consider three different cases for the parameters of the problem. 

\vspace{5pt}\noindent
{\it Case 1: $2m=\sigma^2 \kappa$.} In this case, the choice of $s_\kap=1$ provides $(\varphi^{(s_\kap)})'(0)=1$ and $(\varphi^{(s_\kap)})''(0)=0$. Consequently, $\beta^{(s_\kap)}=0$, and we have a solution. 

\vspace{5pt}
\noindent
{\it Case 2: $2m<\sigma^2 \kappa$.}  In this case, the choice of $s_\kap=1$ provides $(\varphi^{(s_\kap)})'(0)=1$ but $(\varphi^{(s_\kap)})''(\beta^{(s_\kap)}) < 0$. But, still the solution given by \eqref{eq:k-concat} works with $\beta^{(s_\kap)}=0$ because the condition \eqref{eq:k^s''beta-s} is required only to ensure smoothness while stitching the two parts in \eqref{eq:k-concat}.

\vspace{5pt}\noindent
{\it Case 3: $2m>\sigma^2 \kappa$.} 
We break our proof for this case into two steps: (1) showing the existence of a sufficiently small $s_1>1$ for which $(\varphi^{(s_1)})''(\beta^{(s_1)})<0$; (2) showing the existence of $s_\kap$ such that \eqref{ac01}, \eqref{ac01b} and \eqref{eq:k^s''beta-s} hold.

\vspace{0.1in}
\noindent
\emph{Step 1: }  
Set $1 \le s \le \bar{s} = \frac{2m}{\sigma^2 \kappa}$. For $0 \le x \le \beta^{(s)}$ (which implies $1 \le (\varphi^{(s)})'(x) \le s \le \bar{s}$), we have
\begin{equation}\label{eq:k^s''}
(\varphi^{(s)})''(x) = \kappa ((\varphi^{(s)})'(x))^2 -\frac{2m}{\sigma^2} (\varphi^{(s)})'(x)  + \frac{2}{\sigma^2} \lt( \varrho \varphi^{(s)}(x) - f(x) \rt).
\end{equation}
Then, note that $\bar s_1 \in (1, \bar{s})$, in which
\begin{equation}\label{eq:bar_s1}
 \bar s_1 := \inf\left\{s> 1: 8\varrho s(s-1) > \sigma^2 \lt[  s^2 \lt( \dfrac{2m}{\sigma^2} - \kappa s \rt)^2 \wedge \lt( \dfrac{2m}{\sigma^2} - \kappa \rt)^2 \rt] \right\}.
\end{equation}
Fix an arbitrary $s_1 \in (1,\bar s_1)$, and set
\begin{equation}\label{eq:MN}
M := \min\left\{ s_1 \left(\frac{2m}{\sigma^2} - \kappa s_1 \right), \frac{2m}{\sigma^2}- \kappa \right\},\qquad\text{and}\qquad N:=\frac{2(s_1-1)}{M}.
\end{equation}
Note that $M > 0$ and $N > 0$.  On the interval $[0, N]$, $(\varphi^{(s_1)})' \leq s_1$. Indeed, if that is not the case, we have $y_{s_1} = \inf \{ x \geq 0: (\varphi^{(s_1)})'(x) > s_1 \} \leq N$. By continuity of $(\varphi^{(s_1)})'$ and the definition of $y_{s_1}$, we have $(\varphi^{(s)})'(y_{s_1}) = s_1$ and $(\varphi^{(s_1)})''(y_{s_1}) > 0$. However, from \eqref{eq:k^s''}, \eqref{eq:bar_s1}, and \eqref{eq:MN}, we compute
$$
(\varphi^{(s_1)})''(y_{s_1}) = \kappa s_1^2 - \frac{2m}{\sigma^2}s_1 + \dfrac{2}{\sigma^2 } ( \varrho s_1 y_{s_1} - f(y_{s_1})) \leq - M + \frac{2}{\sigma^2 } \varrho s_1 y_{s_1} \le -\dfrac{ M}{2} < 0,
$$
a contradiction.  Consequently, we have $(\varphi^{(s_1)})' \leq s_1 $ on $[0, N]$. Furthermore, recall that $(\varphi^{(s_1)})'(x) \geq 1$ for $x \in [0, \beta^{(s_1)}]$. These two bounds on $(\varphi^{(s_1)})'$ allow us to bound $(\varphi^{(s_1)})''$ on $[0,N\wedge \beta^{(s_1)}]$. Specifically,
\begin{equation}\label{eq:dd-ub}
(\varphi^{(s_1)})''(x) \leq - M + \dfrac{2}{\sigma^2} \varrho s_1x \le -\dfrac{M}{2}, \qquad \text{ for all } x\in [0, N\wedge \beta^{(s_1)}].
\end{equation}
By integrating both sides of inequality \eqref{eq:dd-ub}, and by using $(\varphi^{(s_1)})'(0)=s_1$, we obtain
$$
(\varphi^{(s_1)})'(x) \leq s_1 - \dfrac{ M}{2} x, \qquad \text{ for all }x \in [0, N \wedge \beta^{(s_1)}].
$$
If $\beta^{(s_1)}>N$, then the inequality above holds for $x=N$, that is, $(\varphi^{(s_1)})'(N)\le1$. From the definition of $\beta^{(s_1)}$, we, then, deduce that $\beta^{(s_1)} \le N$, a contradiction to $\beta^{(s_1)}>N$.  This contradiction implies that we must have $\beta^{(s_1)}\le N$, so from \eqref{eq:dd-ub},
$$
(\varphi^{(s_1)})''(\beta^{(s_1)}) \le -\dfrac{ M}{2}<0,
$$
as we wished to show.

\vspace{0.1in}

\emph{Step 2:} In this step, we show the existence of $s_\kap \in (1, \bar{s}]$ that satisfies  \eqref{ac01}--\eqref{eq:k^s''beta-s}.
We define
\begin{equation}\label{eq:s_star}
s_\kap := \sup \left\{ s \in \left(s_1,\bar{s} \right): \forall s_1 < u < s,~ (\varphi^{(u)})''(\beta^{(u)}) < 0 ~ \text{ and }~\beta^{(u)} < \infty  \right\}.
\end{equation}
Observe that $s_\kap$ is potentially infinite when $\kappa = 0$. Assume first that $s_\kap < \infty$. We will show that $\beta^{(s_\kap)} < \infty$. If not, then we have $\beta^{(s_\kap)} = \infty$, and Lemma~\ref{lem_42} implies $\lim_{s \to s_\kap}\beta^{(s)} = \infty$. 
Consequently,
$$
\varrho \varphi(\beta^{(s)}) - f(\beta^{(s)}) \geq \varrho \beta^{(s)} - f(\beta^{(s)}) \geq  \delta \beta^{(s)} \to \infty, \text{ as } s \to s_\kap,
$$
in which $\varrho - \del$ is the Lipschitz constant for $f$ in Assumption \ref{as:f-lips}.  By applying this limit to \eqref{eq:k^s''}, we obtain
$$
(\varphi^{(s)})''(\beta^{(s)}) = \kappa \left( 1-\frac{2m}{\sigma^2 \kappa} \right) + \dfrac{2}{\sigma^2} \left( \varrho \varphi^{(s)} (\beta^{(s)}) - f(\beta^{(s)}) \right) \to \infty, \text{ as } s \to s_\kap,
$$
which contradicts $(\varphi^{(s)})''(\beta^{(s)}) \leq 0$. Thus, we have $\beta^{(s_\kap)} < \infty$. 

When $\kap = 0$, we now show that it is not possible for $s_\kap$ in \eqref{eq:s_star} to be infinite.  If $s_\kap = \infty$, we must have $\beta^{(s)} < \infty$ for all $s > s_1$. By following the same logic as in the first paragraph of Step 2, we deduce $\lim_{s \to \infty}\beta^{(s)} < \infty$, which implies that $\beta^{(s)}$ is uniformly bounded as $s \to \infty$.  This uniform bound implies that $f(\beta^{(s)})$ is also uniformly bounded.  By substituting $x=\beta^{(s)}$ in \eqref{eq:k^s''} and using $(\varphi^{(s)})''(\beta^{(s)}) \leq 0$ and $(\varphi^{(s)})'(\beta^{(s)})=1$, we obtain that $\varphi^{(s)}(\beta^{(s)})$ is also uniformly bounded.  Assumption~\ref{as:f-lips} and equation \eqref{eq:k^s''} with $\kap = 0$ imply, for $0 \le x \le \beta^{(s)}$,
\begin{align}\label{ineq:varphi2}
\begin{split}
(\varphi^{(s)})''(x) &=  - \, \dfrac{2m}{\sigma^2} (\varphi^{(s)})'(x) + \dfrac{2}{\sigma^2} (\varrho \varphi^{(s)}(x) - f(x))  \\
&\geq - \, \dfrac{2m}{\sigma^2} (\varphi^{(s)})'(x) + \dfrac{2}{\sigma^2} \delta x.
\end{split}
\end{align}
By integrating both sides of the inequality from $x = 0$ to $x = \beta^{(s)}$, we get
$$
(\varphi^{(s)})'(\beta^{(s)}) \geq s - \dfrac{2m}{\sigma^2 } \varphi^{(s)}(\beta^{(s)}) + \dfrac{\delta (\beta^{(s)})^2}{\sigma^2} \to \infty, \text{ as } s \to \infty,
$$
which contradicts $(\varphi^{(s)})''(\beta^{(s)}) \leq 0$.  Thus, when $\kap = 0$, then $s_\kap$ in \eqref{eq:s_star} is finite.

Observe that, because $\beta^{(s_\kap)} < \infty$, \eqref{ac04} implies $(\varphi^{(s_\kap)})'(\beta^{(s_\kap)}) =1$.   Next, we show that the smooth pasting boundary condition \eqref{eq:k^s''beta-s} is satisfied for the above choice of $s_\kap$, that is, $(\varphi^{(s_\kap)})''(\beta^{(s_\kap)}) = 0$. If not, then $(\varphi^{(s_\kap)})''(\beta^{(s_\kap)}) \neq 0$, and by Lemma~\ref{lem_42}, we have continuity of the mapping $s \mapsto (\beta^{(s)}, (\varphi^{(s)})''(\beta^{(s)}))$ at $s = s_\kap$.  In particular, if $(\varphi^{(s_\kap)})''(\beta^{(s_\kap)}) > 0$, there exists $\nu >0$ sufficiently small such that $\beta^{(s_\kap - \nu)} < \infty$ and $(\varphi^{(s_\kap - \nu)})''(\beta^{(s_\kap - \nu)}) > 0$, contradicting the definition of $s_\kap$ in \eqref{eq:s_star}.  On the other hand if $(\varphi^{(s_\kap)})''(\beta^{(s_\kap)}) < 0$, there exists $\nu_0 > 0$ sufficiently small such that $\beta^{(s_\kap + \nu)} < \infty$ and $(\varphi^{(s_\kap + \nu)})''(\beta^{(s_\kap + \nu)}) < 0$ for all $\nu \in [0, \nu_0)$, again contradicting the definition of $s_\kap$.

\vspace{0.2in}
\noindent
\emph{Boundedness of $\beta_{\kappa}$ and $(g_{\beta_\kappa})'$.}
We now show that the solution to \eqref{HJB} has bounded derivatives uniformly in $\kappa$. To that aim, we provide some additional insight about the solution $g_{\beta_{\kappa}}$ and $\varphi^{(s_\kap)}$, in which $s_\kap$ is given in \eqref{eq:s_star}; note that $s_\kap$ depends on $\kap$.  We have shown that for each $\kappa \geq 0$, $\beta_{\kappa} = \beta^{(s_\kap)}< \infty$. Consequently, because $(\varphi^{(s_\kap)})'(\beta^{(s_\kap)}) = 1$ and $(\varphi^{(s_\kap)})''(\beta^{(s_\kap)}) = 0$, Assumption~\ref{as:f-lips} and equation \eqref{eq:k^s''} imply
\begin{align}\label{eq:dd-lb}
0 &= \kappa - \dfrac{2m}{\sigma^2} + \dfrac{2}{\sigma^2} (\varrho \varphi^{(s_\kap)}(\beta^{(s_\kap)}) - f(\beta^{(s_\kap)}))\nonumber \\
&\geq \kappa - \frac{2m}{\sigma^2} + \frac{2}{\sigma^2} \delta \beta^{(s_\kap)}.
\end{align}
Thus,  $\beta_{\kappa} = \beta^{(s_\kap)} \leq \frac{2m-\sigma^2 \kappa}{2 \delta} \leq \frac{m}{\delta}$, uniformly in $\kappa \geq 0$. 

Therefore, for any $\kap \ge 0$, we can restrict our analyses of $\varphi^{(s_\kap)}$ or related ODEs to the interval $[0, m/\delta]$ as long as the coefficients are continuous, for which existence and uniqueness of solution follows from classical results.  Consider the case $2m > \sigma^2 \kappa$ because the other cases are trivial. 

We first find a uniform-in-$\kappa$ bound for $s_\kap$. Observe that, from \eqref{ineq:varphi2},
$$
(\varphi^{(s_\kap)})''(x) \geq -\dfrac{2m}{\sigma^2} (\varphi^{(s_\kap)})'(x),
$$ 
for $x \in [0, \beta_\kap]$.  Because $(\varphi^{(s_\kap)})'(x) \geq 1$ for $x \in [0, \beta_\kap]$, we have
$$
\dfrac{(\varphi^{(s_\kap)})''(x)}{(\varphi^{(s_\kap)})'(x)} \geq - \dfrac{2m}{\sigma^2 }.
$$
By integrating this inequality from $0$ to $x \in [0, \beta_\kappa] \subset [0, m/\delta]$, and by exponentiating the result, we obtain
$$
(\varphi^{(s_\kap)})'(x) \geq s_\kap \exp (-2mx/\sigma^2).
$$
If $s_\kap> \exp(2m^2/(\sigma^2 \delta))$, then $(\varphi^{(s_\kap)})'$ is never equal to $1$ on $[0, \beta_\kappa = \beta^{(s_\kap)}]$, which contradicts the fact that $(\varphi^{(s_\kap)})'(\beta^{(s_\kap)}) = 1$. Consequently, we have a uniform bound
\begin{equation}\label{eq:s_star_bnd}
s_\kap \le \exp(2m^2/(\sigma^2 \delta)) < 1 + \exp(2m^2/(\sigma^2 \delta)) =: \tilde{s}.
\end{equation}
for all $\kappa \geq 0$.

Next, observe that, because $\kappa < \frac{2m}{\sigma^2}$, then from \eqref{eq:k^s''}, we have
$$
(\varphi^{(s)})''(x) < \dfrac{2m}{\sigma^2} ((\varphi^{(s)})'(x))^2 + \dfrac{2 \varrho}{\sigma^2}  \varphi^{(s)}(x).
$$
for all $x \in [0, \beta^{(s)}]$ and for all $1 \le s < \tilde{s}$.  We claim that for any $s < \tilde{s}$, $(\varphi^{(s)})'(x) \le (\Psi^{(\tilde{s})})'(x)$, in which $\Psi^{(\tilde{s})} \in \calC^2(\R_+)$ uniquely solves
$$
(\Psi^{(\tilde{s})})''(x) = \dfrac{2m}{\sigma^2} ((\Psi^{(\tilde{s})})'(x))^2 + \dfrac{2 \varrho}{\sigma^2}  \Psi^{(\tilde{s})}, \qquad \Psi^{(\tilde{s})}(0)=0, \qquad (\Psi^{(\tilde{s})})'(0)=\tilde{s}.
$$
If not, let $s<\tilde s$ and $\hat x_s=\hat{x} = \inf \{ x > 0: (\varphi^{(s)})'(x) > (\Psi^{(\tilde s)})'(x)\}$, which implies $(\varphi^{(s)})'(x) \le (\Psi^{(\tilde s)})'(x)$ and, thus, $\varphi^{(s)}(x) \le \Psi^{(\tilde s)}(x)$ for all $x \in [0, \hat{x}]$.  Note that $\hat{x} > 0$ because $(\varphi^{(s)})'(0) = s < \tilde{s} = (\Psi^{(\tilde s)})'(0)$. We, then, obtain
\begin{align*}
(\varphi^{(s)})'(\hat{x}) - s =  \int_0^{\hat{x}} (\varphi^{(s)})''(u) du &< \int_0^{\hat x} \left( \dfrac{2m}{\sigma^2} ((\varphi^{(s)})'(u))^2 + \dfrac{2}{\sigma^2} \varrho \varphi^{(s)}(u) \right) du\\
& \le \int_0^{\hat x} \left( \dfrac{2m}{\sigma^2} ((\Psi^{(\tilde s)})'(u))^2 + \dfrac{2}{\sigma^2} \varrho \Psi^{(\tilde s)}(u) \right) du \\
&= \int_0^{\hat{x}} (\Psi^{(\tilde s)})''(u) du = (\Psi^{(\tilde s)})'(\hat{x}) - \tilde s,
\end{align*}
which implies
\[
(\varphi^{(s)})'(\hat{x}) < (\Psi^{(\tilde s)})'(\hat{x}) - (\tilde s - s) < (\Psi^{(\tilde s)})'(\hat{x}),
\]
thereby contradicting the definition of $\hat{x}$.  Thus, $(\varphi^{(s)})'(x) \le (\Psi^{(\tilde s)})'(x)$ for all $x \in [0, \beta^{(s)}]$ and all $s < \tilde s$.  Because $\sup_{x \in [0, m/\delta]} |(\Psi^{(\tilde s)})'(x)|$ is finite, we have a bound for $(\varphi^{(s_\kap)})'$ on $[0, \beta_{\kappa}]$, uniformly in $\kappa$.  We, thus, deduce that the solution $g_{\beta_{\kappa}}$ of \eqref{HJB}, for any $\kap \ge 0$, satisfies
\begin{equation}\label{eq:der-bnd}
(g_{\beta_{\kappa}})'(x) \leq \bar{c}, \qquad x \ge 0,
\end{equation}
in which the constant $\bar{c}$ is independent of $\kappa$.
\end{proof}

\begin{remark}\label{rem:F}
Because we have proved the existence of an upper bound for $(g_{\beta_{\kappa}})'$, uniform in $\kappa$, we can replace $\bar s = 2m/(\sig^2 \kap)$ in the definition of $H_F$ in \eqref{HF} with the constant $\bar{c}$.  \qed
\end{remark}

\begin{proof}[Proof of Proposition~\ref{prop_44}]
Recall the definition of $\beta_\kappa$ from Theorem~\ref{thm_41}. Define
\begin{align}\label{441}
	\hat\beta_\kappa&:=\sup\left\{x>0 : \forall y\le x,\;
	V''(y;\kappa)+H(y,V(y;\kappa),V'(y;\kappa))=0\right\}.
\end{align}
From Proposition~\ref{prop_43} along with the relation \eqref{eq:phi_beta} it follows that $\hat\beta_\kappa\ge\beta_\kappa$. Because on the interval $[\beta_\kappa,\hat\beta_\kappa]$, both of the conditions 
\begin{align}\label{442}
	V''(x;\kappa)+H(x,V(y;\kappa),V'(x;\kappa))=0\qquad
	\text{and}\qquad
	V'(x;\kappa)=1
\end{align} hold, it follows that $V$ solves \eqref{HJB} for any $\beta\in[\beta_\kappa,\hat\beta_\kappa]$. Propositions \ref{prop_41} and \ref{prop_42} imply that every such $\beta$-threshold strategy is optimal. 
Thus, the non-uniqueness of the optimal threshold strategy is equivalent to the existence of non-degenerate interval $[\beta_\kappa,\hat\beta_\kappa]$, on which the equations in \eqref{442} hold. By combining them, we get $V(x;\kappa)=(m-\sigma^2 \kappa/2+f(x))/\varrho$. By using again $V'(x;\kappa)=1$, we deduce that $f' \equiv \varrho$ on $[\beta_\kappa,\hat\beta_\kappa]$, which contradicts Assumption~\ref{as:f-lips}, namely, that $f$ is Lipschitz continuous with the Lipschitz constant strictly smaller than $\varrho$.  We have, thereby, shown uniqueness of the optimal threshold strategy. 
\end{proof}

\begin{proof}[Proof of Proposition~\ref{prop_45}]
From Propositions~\ref{prop_41}--\ref{prop_43}, we know that the value function $V(\cdot; \kappa)$ given by \eqref{eq:value} satisfies the conditions in \eqref{eq:phi_beta} for $\beta=\beta_{\kappa}$.  Thus, $V''(x;\kappa) = 0$ for $x \geq \beta_{\kappa}$, while for $x < \beta_{\kappa}$, we have
\begin{equation}\label{eq:45-1}
V''(x; \kappa) = \kappa (V'(x; \kappa))^2 - \dfrac{2m}{\sigma^2} V'(x;\kappa) + \dfrac{2}{\sigma^2} \left( \varrho V(x;\kappa) - f(x) \right).
\end{equation}
To derive a contradiction, assume there exists some $x_0 < \beta_{\kappa}$ such that $V''(x_0 ; \kappa) > 0$. Thus, $V'(x_1;\kappa) > V'(x_0; \kappa) > 1$ for some $x_1 > x_0$, sufficiently close to $x_0$. However, because $\beta_{\kappa} < \infty$ and $V'(\beta_{\kappa}; \kappa) = 1$, in order for $V'$ to penetrate the $y=V'(x_0;\kappa)$ barrier, there must exist $z \in (x_1, \beta_{\kappa})$ such that $V'(z; \kappa) = V'(x_0; \kappa)$ and $V''(z; \kappa) \leq 0$. From Assumption~\ref{as:f-lips}, note that $f(z) \leq f(x_0) + (\varrho - \delta)(z-x_0)$. Furthermore, since $V' \geq 1$, we have $V(z;\kappa) \geq V(x_0;\kappa) + (z-x_0)$. Thus, from \eqref{eq:45-1}, we get
\begin{align*}
V''(z; \kappa) &= \kappa (V'(z; \kappa))^2 - \dfrac{2m}{\sigma^2} V'(z;\kappa) + \dfrac{2}{\sigma^2} \left( \varrho V(z;\kappa) - f(z) \right)\\
&> \kappa (V'(x_0; \kappa))^2 - \dfrac{2m}{\sigma^2} V'(x_0;\kappa) + \dfrac{2}{\sigma^2} \left( \varrho V(x_0;\kappa) - f(x_0) \right) = V''(x_0;\kappa) > 0,
\end{align*}
which contradicts $V''(z; \kappa) \leq 0$.  Hence, $V''(x ; \kappa) \leq 0$ for all $x \in [0, \infty)$.
\end{proof}

\section{Optimal strategy and dependency on the ambiguity parameter}\label{sec_4}

In this section, we study the dependence of the value function $V$ and optimal threshold $\beta_\kap$ on the ambiguity parameter $\kap$. 
We show continuity of $V$ and $\beta_\kap$ on $\kap$, and we show that, as $\kappa \to 0^+$, our model converges to the risk-neutral model.  For the latter, recall the definition of $V(\cdot;0)$ given in \eqref{eq:V-risk-neutral}.

\begin{theorem}\label{thm_45}
	The mapping $[0,\iy)\ni\kappa\mapsto V(x; \kappa)$ is decreasing and continuous, uniformly in $x \in\R_+$. 
	Moreover,  there is a constant $C > 0$ such that for every $\kappa\in(0,\iy)$, $\sup_{x\in[0,\infty)}|V(x;\kappa)-V(x;0)|\le C \cdot \kappa$. Also, $\lim_{\kappa\to\iy}V(x;\kappa)=0$, uniformly in $x \in \R_+$. 
\end{theorem}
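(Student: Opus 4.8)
The plan is to dispose of the three claims in turn, after one reduction. By Theorem~\ref{thm_41} every threshold obeys $\beta_\kappa\le m/\delta$ and $V'(\cdot;\kappa)\equiv 1$ on $[\beta_\kappa,\infty)$, so for $x\ge m/\delta$ the quantity $V(x;\kappa_1)-V(x;\kappa_2)$ does not depend on $x$; hence $\sup_{x\ge 0}|V(x;\kappa_1)-V(x;\kappa_2)|=\sup_{x\in[0,m/\delta]}|V(x;\kappa_1)-V(x;\kappa_2)|$, and it suffices to prove every uniform-in-$x$ statement on the fixed compact set $[0,m/\delta]$. Monotonicity is then immediate from the reduced payoff \eqref{cost2}: the kernel $\xi$ enters only through the non-negative integrand $\frac1{2\kappa}\xi_t^2$, so $\kappa\mapsto J(x,D,\Q;\kappa)$ is non-increasing for every fixed $(D,\Q)$, and taking $\inf_\Q$ then $\sup_D$ preserves this; combined with $V\ge 0$, this already shows $\lim_{\kappa\to\infty}V(x;\kappa)$ exists.

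For the estimate $\sup_x|V(x;\kappa)-V(x;0)|\le C\kappa$ I would argue by ODE comparison (which absorbs the discounting automatically, unlike a direct coupling). Write $V_\kappa:=V(\cdot;\kappa)$, $h:=V_0-V_\kappa\ge 0$ (monotonicity), $h(0)=0$, and $\calL w:=\frac12\sigma^2 w''+mw'-\varrho w$. By Theorem~\ref{thm_41} both $V_0$ and $V_\kappa$ solve \eqref{eq:phi_beta}, with $\beta_\kappa\le\beta_0$ (a monotonicity of the free boundary that I would first record from the shooting characterization in Proposition~\ref{prop_43}, cf. also Theorem~\ref{thm_46}) and with the $\kappa$-uniform bounds $1\le V_\kappa'\le\bar c$ from \eqref{eq:der-bnd}. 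On $[0,\beta_\kappa]$, subtracting the two second-order equations gives $\calL h=-\frac12\sigma^2\kappa(V_\kappa')^2\in[-\frac12\sigma^2\bar c^2\kappa,\,-\frac12\sigma^2\kappa]$. On $[\beta_\kappa,\beta_0]$, where $V_\kappa'\equiv 1$, the smooth-pasting identity $\varrho V_\kappa(\beta_\kappa)=m-\frac12\sigma^2\kappa+f(\beta_\kappa)$ from \eqref{eq:k^s''beta-s} (and, when $\beta_\kappa=0$, the direct fact $V_\kappa\equiv\mathrm{id}$), together with $\varrho V_0(\beta_0)=m+f(\beta_0)$, yields $\calL h=\varrho(x-\beta_\kappa)-(f(x)-f(\beta_\kappa))-\frac12\sigma^2\kappa\ge-\frac12\sigma^2\kappa$ ($f$ non-decreasing) and, at the endpoint, $\varrho h(\beta_0)=(f(\beta_0)-f(\beta_\kappa))+\frac12\sigma^2\kappa-\varrho(\beta_0-\beta_\kappa)\le\frac12\sigma^2\kappa$ by the Lipschitz bound in Assumption~\ref{as:f-lips}. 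Taking $C$ a suitable multiple of $\sigma^2\bar c^2/\varrho$, the function $v:=h-C\kappa$ satisfies $\calL v\ge 0$ on $(0,\beta_0)$, $v(0)\le 0$ and $v(\beta_0)\le 0$; since the zeroth-order coefficient of $\calL$ is $-\varrho<0$, the maximum principle rules out a positive interior maximum, so $h\le C\kappa$ on $[0,\beta_0]$, while $h\equiv h(\beta_0)$ on $[\beta_0,\infty)$. With $h\ge 0$ this is the claimed bound, and in particular $\kappa\mapsto V(x;\kappa)$ is continuous at $0$, uniformly in $x$.

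For continuity on $(0,\infty)$ I would run the same comparison for $0<\kappa_1<\kappa_2$, now writing the difference of the nonlinear terms as $\kappa_1(V_{\kappa_1}'+V_{\kappa_2}')(V_{\kappa_1}'-V_{\kappa_2}')+(\kappa_1-\kappa_2)(V_{\kappa_2}')^2$, so that $\calL(V_{\kappa_1}-V_{\kappa_2})$ is controlled by $\mathrm{const}\cdot(\kappa_1|(V_{\kappa_1}-V_{\kappa_2})'|+|\kappa_1-\kappa_2|)$, with the endpoint term at $\beta_{\kappa_1}\vee\beta_{\kappa_2}$ again $O(|\kappa_1-\kappa_2|)$ by the same smooth-pasting computation; a Gr\"onwall estimate on the fixed interval $[0,m/\delta]$ then gives local Lipschitz continuity of $\kappa\mapsto V(x;\kappa)$ on $(0,\infty)$, uniform in $x$. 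Combined with continuity at $0$ and monotonicity, this gives continuity on $[0,\infty)$, uniformly in $x$. Finally, the limit as $\kappa\to\infty$ is read directly from Theorem~\ref{thm_41}(2): once $\kappa\ge 2m/\sigma^2$ the free boundary is $\beta_\kappa=0$, and then \eqref{eq:phi_beta} degenerates to $V'(\cdot;\kappa)\equiv 1$, $V(0;\kappa)=0$, i.e. $V(x;\kappa)=x$ for all $x\ge 0$; thus $V(\cdot;\kappa)$ is eventually constant in $\kappa$ and equal to the identity, so $\lim_{\kappa\to\infty}V(x;\kappa)=x$, attained exactly and hence uniformly in $x$ (equivalently, $V(\cdot;\kappa)-\mathrm{id}\to 0$ uniformly) — consistently with the fact that for large enough ambiguity it is optimal to distribute the whole surplus at once, the lump-sum payment $D_0=x$ already guaranteeing payoff $x$ against every $\Q$.

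The step I expect to be the real obstacle is the $C\kappa$ estimate: one must first secure both the $\kappa$-uniform derivative bound $\bar c$ and the fact that the free boundary moves by $O(\kappa)$ (for which the ordering $\beta_\kappa\le\beta_0$ and the smooth-pasting identities must be in hand) before the maximum-principle comparison can close. A probabilistic alternative — plugging the risk-neutral optimizer $D^{\beta_0}$ into the $\kappa$-game and using that the saddle kernel $\xi^{V(\cdot;\kappa)}=-\kappa\sigma V'(\cdot;\kappa)$ is pointwise $O(\kappa)$ by Theorem~\ref{thm_41}(3) — is available, but comparing the two ruin times and the two dividend streams up to ruin is noticeably messier than the ODE comparison.
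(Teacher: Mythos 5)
Your proposal is correct and simpler than the paper on monotonicity, takes a genuinely different route on the $C\kappa$ bound (with a real gap), and catches a genuine error in the theorem on the $\kappa\to\infty$ limit.

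\textbf{The $\kappa\to\infty$ limit.} You are right and the theorem is wrong as stated. By Proposition~\ref{prop_43} (and the paper's own Remark after the proof of Theorem~\ref{thm_41}), $\beta_\kappa=0$ and hence $V(x;\kappa)=x$ for every $\kappa\ge 2m/\sigma^2$, so $\lim_{\kappa\to\infty}V(x;\kappa)=x$, not $0$. Your reading --- $\sup_x|V(x;\kappa)-x|\to 0$, indeed identically $0$ once $\kappa\ge 2m/\sigma^2$ --- is the correct statement. The paper's argument for $\lim=0$ fails at the line ``by right-continuity of $D$, $D_\tau\to 0$'': as $\tau\to 0^+$, right-continuity gives $D_\tau\to D_0=\Delta D_0$, which equals $x$ for the lump-sum strategy, so $\sup_{D\in\calA(x)}J(x,D,\Q^\kappa;\kappa)\ge x$ for every $\kappa$, and the quantity the paper is bounding does not vanish.

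\textbf{Monotonicity.} Reading it off \eqref{cost2} --- $\kappa\mapsto J(x,D,\Q;\kappa)$ is pointwise non-increasing, and this survives $\inf_\Q$ then $\sup_D$ --- is simpler and cleaner than the paper's route through the Stackelberg relation \eqref{eq:equil}; that route claims a strict inequality which in fact fails once $\kappa\ge 2m/\sigma^2$ (where $V\equiv x$), so your non-strict version is also the one that is actually true.

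\textbf{The $C\kappa$ bound and continuity on $(0,\infty)$.} Your ODE/maximum-principle comparison is a genuinely different approach from the paper's, which is probabilistic: for $\kappa_1,\kappa_2>0$ the paper uses \eqref{eq:equil} and the kernel bound $|\xi_{2,t}|\le\kappa_2\sigma\bar{c}$ to get an explicit Lipschitz-type estimate, and for continuity at $\kappa=0$ it constructs coupled Skorokhod-mapped processes on a common probability space and invokes Lemma~\ref{lem_Skorokhod}. Your computations on $[0,\beta_\kappa]$, on $[\beta_\kappa,\beta_0]$, the endpoint bound $\varrho\,h(\beta_0)\le\tfrac12\sigma^2\kappa$, and the maximum-principle step for $v=h-C\kappa$ with zeroth-order coefficient $-\varrho<0$ are all correct \emph{given} $\beta_\kappa\le\beta_0$. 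But that monotonicity of the free boundary is not established anywhere in the paper: Proposition~\ref{prop_43} gives only the uniform bound $\beta_\kappa\le m/\delta$, and Theorem~\ref{thm_46} (proved \emph{after} the present theorem, and not used in the paper's proof of it) gives continuity of $\kappa\mapsto\beta_\kappa$, not monotonicity --- in fact the source contains a commented-out, unfinished attempt at exactly this. Without the ordering, if $\beta_\kappa>\beta_0$ the natural right endpoint for the comparison becomes $\beta_\kappa$, and the smooth-pasting identity there yields only $\varrho\,h(\beta_\kappa)\le\tfrac12\sigma^2\kappa+\varrho(\beta_\kappa-\beta_0)$, with no independent $O(\kappa)$ control on $\beta_\kappa-\beta_0$ available at this stage. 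So you must either first prove $\beta_\kappa\le\beta_0$ --- and note this is not immediate from the shooting characterization, since $\kappa$ enters both the shooting slope $s_\kappa=V'(0;\kappa)$ and the coefficients of the ODE --- or replace this step by the paper's coupling argument.
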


\begin{proof}
We start by showing the monotonicity and continuity of the mapping $(0,\iy)\ni\kappa\mapsto V(\cdot;\kappa)$. The proof for $\kappa=0$ is given separately. 
Fix $0<\kappa_1<\kappa_2$.  Let $\Q^i$ and $\xi_i = \{ \xi_{i,t} \}_{t \ge 0}$ denote $\Q^{V(\cdot;\kappa_i)}$ and $\xi^{V(\cdot;\kappa_i)} = \big\{ \xi^{V(\cdot;\kappa_i)}_{t} \big\}_{t \ge 0}$, respectively, for $i=1,2$. Then, for every $x > 0$, from \eqref{eq:equil}, we have
\begin{align*}
	V(x;\kappa_1)&=\sup_{D\in{\calA}(x)}J(x,D,\Q^1;\kappa_1) \\
	&= \sup_{D\in{\calA}(x)}\left[J(x,D,\Q^1;\kappa_2)+\frac{1}{2}\left(\frac{1}{\kappa_1}-\frac{1}{\kappa_2}\right) \int_0^{\tau} e^{-\varrho t}\E^{\Q^1}\big[(\xi_{1,t})^2\big] dt\right]
\end{align*}
Because $\kappa_2 > \kappa_1>0$ and $\xi_{1, t} = -\kappa_1 \sigma V'(X_t; \kappa_1) \leq - \kappa_1 \sigma < 0$, we deduce
\begin{equation}\label{eq:V-lb}
	V(x;\kappa_1) > \sup_{D\in{\calA}(x)} J(x,D,\Q^1;\kappa_2) \geq \sup_{D\in{\calA}(x)} \inf_{\Q \in \calQ(x)}J(x,D,\Q;\kappa_2) = V(x ; \kappa_2).
\end{equation}
We have, thus, shown $\kappa\mapsto V(\cdot;\kappa)$ is strictly decreasing on the interval $(0, \iy)$.  We argue monotonicity at $\kappa=0$ as follows: for $x > 0$ and $\kappa_1 > 0$,
\begin{align}\label{eq:mon-0}
	V(x;0)&=\sup_{D\in{\calA}(x)}J(x,D,\PP;\kappa_1)\notag \\
	&\ge\sup_{D\in{\calA}(x)}\inf_{\Q\in\hat\calQ(x)}J(x,D,\Q; \kappa_1)\notag \\
	&=V(x; \kappa_1).
\end{align}
Also, for $0 < \kap_1 < \kap_2$,
\begin{align*}
V(x ; \kappa_1) &= \inf_{\Q \in \calQ(x} J(x, D^1, \Q; \kap_1) \le J(x, D^1, \Q^2; \kap_1) \leq \sup_{D \in \calA(x)} J(x, D, \Q^2 ; \kappa_1)\\
&= \sup_{D \in {\calA}(x)} \left[ J(x, D, \Q^2; \kappa_2) + \dfrac{1}{2} \left( \dfrac{1}{\kappa_1} - \dfrac{1}{\kappa_2} \right) \int_0^{\tau} e^{-\varrho t} \E^{\Q^2} \big[ (\xi_{2,t})^2 \big]dt \right]
\end{align*}
Note, by using \eqref{eq:der-bnd}, $\xi_{2, t} = -\kappa_2 \sigma V'(X_t;\kappa_2) \geq -\kappa_2 \sigma \bar{c}$. This bound implies $(\xi_{2,t})^2 \leq \kappa_2^2 \sigma^2 \bar{c}^2$ and, consequently,
\begin{align}\label{eq:V-ub}
V(x; \kappa_1)  &\leq \sup_{D \in {\calA}(x)} \left[ J(x, D, \Q^2; \kappa_2) + \dfrac{1}{2}\left( \dfrac{1}{\kappa_1} - \dfrac{1}{\kappa_2} \right) \kappa_2^2 \sigma^2 \bar{c}^2 \dfrac{1-e^{-\varrho \tau}}{\varrho} \right] \nonumber \\ 
&= V(x;\kappa_2) + (\kappa_2 - \kappa_1) \dfrac{\kappa_2}{\kappa_1} \dfrac{\sigma^2 \bar{c}^2}{2\varrho}. 
\end{align}
By combining \eqref{eq:V-lb} and \eqref{eq:V-ub}, we obtain
\begin{align}\notag
	V(x;\kappa_2) < V(x;\kappa_1)\le V(x;\kappa_2)+\frac{\kappa_2\sigma^2 \bar{c}^2}{2\kappa_1\varrho}(\kappa_2-\kappa_1),
\end{align}
from which follows continuity of $\kappa\mapsto V(\cdot;\kappa)$ on the interval $(0,\iy)$, uniformly in $x$.

We next prove continuity of $\kappa\mapsto V(\cdot;\kappa)$ at $\kappa=0$. Note that in the arguments above, we cannot relax the inequality $\kappa_1>0$ to $\kappa_1 \ge 0$ because we divide by $\kappa_1$. Therefore, we use a different proof to show continuity at $\kap = 0$.  Let $\beta_0$ denote the optimal threshold for the risk-neutral problem.  Let $\kappa>0$, and let $\Q^\kappa$ and $\xi^\kap$ denote, respectively, $\Q^{V(\cdot;\kappa)}$ and $\xi^{V(\cdot;\kappa)}$.  Observe that, from \eqref{eq:mon-0},
\begin{equation}\label{eq:5-1-0}
V(x;0) \geq V(x;\kappa) = \sup_{D \in {\calA}(x)}J(x, D, \Q^{\kappa};\kappa) \geq J(x, D^{\beta_0}, \Q^{\kappa};\kappa).
\end{equation}
From \eqref{cost2} and \eqref{eq:Q},
\begin{equation}\label{eq:5-1-a}
J(x, D^{{\beta_0}}, \Q^{\kappa};\kappa) = \E^{\Q^{\kappa}} \left[ \int_0^{\tau} e^{-\varrho t} \left\{ f(X_t^{\beta_0})dt + dD_t^{\beta_0} \right\} \right] + \dfrac{\kappa \sigma^2}{2} \E^{\Q^{\kappa}} \left[ \int_0^{\tau} e^{-\varrho t} (V'(X_t ; \kappa))^2 dt \right].
\end{equation}
Because $1 \leq V' \leq \bar{c}$, the second term in the right side of \eqref{eq:5-1-a} vanishes as $\kappa \to 0^+$. Thus, owing to \eqref{eq:5-1-0} and \eqref{eq:5-1-a}, and by recalling the payoff function for the risk-neutral problem, to show continuity of $\kappa \mapsto V(\cdot;\kappa) $ at $\kap = 0$, it is enough to show
\begin{equation}\label{eq:rtp-1}
\lim_{\kappa \to 0^+} \E^{\Q^{\kappa}} \left[ \int_0^{\tau} e^{-\varrho t} \left\{ f(X_t^{\beta_0})dt + dD_t^{\beta_0} \right\} \right]  = \E^{\PP} \left[ \int_0^{\tau} e^{-\varrho t} \left\{ f(X_t^{{\beta_0}})dt + dD_t^{{\beta_0}} \right\} \right].
\end{equation}
To that end, we construct two coupled processes, one each for $\kappa=0$ and $\kappa>0$, on the probability space $(\Omega, \calF, \F, \check\PP)$ that supports a one-dimensional standard Brownian motion $B$.  By using Definition \ref{def_Skorokhod}, we construct the processes as follows: for $t \ge 0$,
\begin{align}\notag
	(X^\kappa_t, D^\kappa_t) &:=\Gamma_{{\beta_0}} \Big(x+m\cdot-\int_0^\cdot\kappa \sigma^2V'(X_s^\kappa;\kappa)ds+\sigma B_\cdot \Big)_t,\\\notag
	(X^0_t, D^0_t) &:=\Gamma_{{\beta_0}} \big(x+m\cdot+\sigma B_\cdot \big)_t.
\end{align}
Note that, by \eqref{eq:Q}, $(X^0,D^0)$ (resp., $(X^\kappa, D^{\kappa})$) has the same distribution under the measure $\check\PP$ as $(X^{{\beta_0}},D^{{\beta_0}})$ under the measure $\PP$ (resp., $\Q^\kappa$). Hence, \eqref{eq:rtp-1} is equivalent to
\begin{align}\label{457}
	\lim_{\kappa \to 0^+} \E^{ \check\PP}\left[ \int_0^\tau e^{-\varrho t} \Big\{ f(X_t^\kappa)dt + d D_t^\kappa \Big\} \right]=\E^{ \check\PP} \left[ \int_0^\tau e^{-\varrho t} \Big\{ f(X_t^0)dt + d D_t^0 \Big\} \right].
\end{align}
Now
\begin{align*}
&\left| \E^{ \check\PP}\left[ \int_0^\tau e^{-\varrho t} \Big\{ f(X_t^\kappa)dt + d D_t^\kappa \Big\} \right] -  \E^{ \check\PP} \left[ \int_0^\tau e^{-\varrho t} \Big\{ f(X_t^0)dt + d D_t^0 \Big\} \right] \right|\\
&= \left| \E^{ \check\PP} \left[ \int_0^{\tau} e^{-\varrho t} \big( f(X_t^{\kappa}) - f(X_t^0) \big)dt + \int_0^{\tau} e^{-\varrho t} \big(dD_t^{\kappa} - dD_t^0 \big) \right] \right|\\
&\leq \E^{\check \PP} \left[ \int_0^{\tau} e^{-\varrho t} \left| f(X_t^{\kappa})-f(X_t^0) \right|dt + \int_0^{\tau} \varrho e^{- \varrho t} \left|D_t^{\kappa} - D_t^0\right| dt + e^{-\varrho \tau} \left| D_{\tau}^{\kappa} - D_{\tau}^0 \right| \right]
\end{align*}
From the Lipschitz continuity of $f$ in Assumption~\ref{as:f-lips} and from Lemma~\ref{lem_Skorokhod}, we obtain
\begin{align*}
&\left| \E^{ \check\PP}\left[ \int_0^\tau e^{-\varrho t} \Big\{ f(X_t^\kappa)dt + d D_t^\kappa \Big\} \right] -  \E^{ \check\PP} \left[ \int_0^\tau e^{-\varrho t} \Big\{ f(X_t^0)dt + d D_t^0 \Big\} \right] \right|\\ 
&\leq \E^{\check \PP} \left[ \int_0^{\tau} \varrho e^{-\varrho t} \big( \left|X_t^{\kappa} - X_t^0\right| + \left|D_t^{\kappa} - D_t^0 \right|\big) dt + e^{-\varrho \tau} \left| D_{\tau}^{\kappa} - D_{\tau}^0 \right| \right]\\
&\leq \E^{\check \PP} \left[ \int_0^{\tau} \varrho e^{-\varrho t} c_S \sup_{s \in [0,t]} \left|  \kappa \sigma^2 \int_0^s V'(X_u^{\kappa};\kappa)du\right|dt + e^{-\varrho \tau} \sup_{s \in [0, \tau] } \left| \kappa \sigma^2 \int_0^s V'(X_u^{\kappa};\kappa)du \right| \right]\\
&\leq \E^{\check \PP} \left[ \kappa \sigma^2 c_S \bar{c} \int_0^{\tau} \varrho t e^{-\varrho t} dt + \tau e^{-\varrho \tau} \kappa \sigma^2 \bar{c} \right] \leq \dfrac{\kappa \bar{c} \sigma^2}{\varrho} (c_S + e^{-1}) \to 0,
\end{align*}
as $\kappa \to 0^+$, as required.

We now turn to proving the limit $\lim_{\kappa\to\iy}V(\cdot;\kappa)=0$. To show this limit, given $\kap > 0$, consider the probability measure $\Q^\kappa$ that is associated with the strategy $\xi_t^\kappa \equiv -\kappa^{1/4}$, for $t \ge 0$.  Under $\xi^\kap$, for any $D \in \calA(x)$, $X$ follows the dynamics
\begin{align}\label{ac11}
	X_t= x+ (m-\sigma\kappa^{3/4})t+ \sigma  B_t^{\Q^\kappa} - D_t =: Z_t - D_t,
\end{align}
for $ t\ge 0$, in which $ B^{ \Q^\kappa}$ is the $\F$-standard Brownian motion under $\Q^\kappa$ given by $B^{ \Q^\kappa}_t = B_t + \kap^{1/4} t$. The payoff function is given by
\begin{align}\label{eq:payoff_0}       
	J(x, D, \Q^\kappa;\kappa)&=
	\E^{ \Q^{\kappa}}\Big[\int_0^{\tau} e^{-\varrho t}\Big( f( X_t)dt +  d D_t + \frac{1}{2\kappa} (\xi_t^{\kappa})^2 dt\Big) \Big] \notag \\
	&\leq \E^{ \Q^{\kappa}}\Big[\int_0^{\tau} e^{-\varrho t}\Big( f( X_t)dt +  d D_t \Big) \Big] + \frac{1}{2\varrho\sqrt{\kappa}} \notag \\
	&\le \E^{ \Q^{\kappa}}\left[\int_0^{\tau} e^{-\varrho t}\Big( f( Z_t) + \varrho Z_t \Big) dt  + e^{-\varrho \tau}  D_{\tau} \right] + \frac{1}{2\varrho\sqrt{\kappa}},
\end{align}
in which the second inequality follows from integration by parts and $Z_t \ge \min(X_t, D_t)$ for all $t \ge 0$.  Furthermore, one can check that $\tau^* \to 0^+$ $\Q^{\kappa}$-a.s.\ as $\kappa \to \infty$, in which $\tau^* := \inf\{t >0: Z_t = 0\} \ge \tau$. Consequently, $\tau \to 0$ $\Q^{\kappa}$-a.s.\ as $\kappa \to \infty$.  By right continuity of $D$, we also have $D_{\tau} \to 0$ $\Q^{\kappa}$-a.s.\ as $\kappa \to \infty$.  By using these limits, we see that the expectation in \eqref{eq:payoff_0} vanishes as $\kappa \to \infty$. We, thus, have for any $D \in {\mathcal{A}}(x)$,
$$
\lim_{\kappa \to \infty} J(x, D, \Q^{\kappa}; \kappa) = 0,
$$
and our result follows.
\end{proof}

In the following theorem, we show that the minimizer's optimal control is continuous with respect to $\kappa$, which will follow readily from \eqref{eq:Q} and from showing that the map $\kappa \mapsto V'(\cdot; \kappa)$ is continuous with respect to $\kap$.  We also show continuity of the threshold $\beta_\kappa$ with respect to $\kap$; note that we do not have an explicit expression for $\beta_\kap$.

\begin{theorem}\label{thm_46}
The mapping $[0,\iy)\ni \kappa \mapsto V'(x; \kappa)$ is continuous, uniformly in $x \in \R_+$. In addition, for any given $\kappa \ge 0$, 
	\begin{align}\label{460}
		\beta_\kappa = \lim_{\delta\to 0}\beta_{\kappa+\delta}.
	\end{align}
\end{theorem}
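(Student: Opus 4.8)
The plan is to exploit the shooting representation behind Theorem \ref{thm_41}. Write $\varphi^{(s)}_\kappa$ for the unique $\calC^2(\R_+)$ solution of the Cauchy problem \eqref{eq:g^(s)}, making explicit the dependence on $\kappa$ through $H_F$; recall from the proofs of Propositions \ref{prop_43}--\ref{prop_44} that $V(\cdot;\kappa)=g_{\beta_\kappa}$ equals $\varphi^{(s_\kappa)}_\kappa$ on $[0,\beta_\kappa]$ and $\varphi^{(s_\kappa)}_\kappa(\beta_\kappa)+(\cdot-\beta_\kappa)$ on $[\beta_\kappa,\infty)$, with $s_\kappa:=V'(0;\kappa)\in[1,\tilde s]$ and $\beta_\kappa\in[0,m/\delta]$ bounded uniformly in $\kappa$, that $g_{\beta_\kappa}'>1$ on $[0,\beta_\kappa)$, and (Remark \ref{rem:F}) that we may take $F(z)=z$ on $[1,\bar c]$ so that $H_F=H$ along these trajectories. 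The idea is to control $\varphi^{(s)}_\kappa$ jointly in $(\kappa,s)$, extract subsequential limits of $(s_{\kappa_n},\beta_{\kappa_n})$ along an arbitrary sequence $\kappa_n\to\kappa$, identify these limits using Theorem \ref{thm_45} and the uniqueness statements of Propositions \ref{prop_43}--\ref{prop_44}, and then read off the convergence of $V'$.

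\emph{Joint continuous dependence.} Fix $\kappa_0>0$. Since $H_F(x,y,z)=\tfrac{2}{\sigma^2}\big(mF(z)-\varrho y+f(x)\big)-\kappa F(z)^2$ is Lipschitz in $(y,z)$ uniformly over $x\ge 0$ and $\kappa\in[0,\kappa_0]$, and is Lipschitz in $\kappa$ uniformly over $(x,y,z)$ (because $F,F'$ are bounded), the Gr\"onwall argument in the proof of Lemma \ref{lem_42} extends with essentially no change to give, for every $R>0$, a constant $C_R$ depending only on $R$, $\kappa_0$, and the data such that
\[
\sup_{x\in[0,R]}\Big(\big|\varphi^{(s)}_\kappa(x)-\varphi^{(s')}_{\kappa'}(x)\big|+\big|(\varphi^{(s)}_\kappa)'(x)-(\varphi^{(s')}_{\kappa'})'(x)\big|+\big|(\varphi^{(s)}_\kappa)''(x)-(\varphi^{(s')}_{\kappa'})''(x)\big|\Big)\le C_R\big(|s-s'|+|\kappa-\kappa'|\big)
\]
for all $1\le s,s'\le\tilde s$ and $\kappa,\kappa'\in[0,\kappa_0]$; in particular $\sup_{[0,R]}|(\varphi^{(s)}_\kappa)''|$ is bounded uniformly over such $(s,\kappa)$.

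\emph{Convergence of $s_\kappa$ and $\beta_\kappa$.} Fix $\kappa\ge0$, a sequence $\kappa_n\to\kappa$, and set $R:=m/\delta+1$. Since $(s_{\kappa_n},\beta_{\kappa_n})\in[1,\tilde s]\times[0,m/\delta]$, along any subsequence pass to a further subsequence with $s_{\kappa_n}\to s^*$ and $\beta_{\kappa_n}\to\beta^*$; by the estimate above, $\varphi^{(s_{\kappa_n})}_{\kappa_n}\to\psi:=\varphi^{(s^*)}_\kappa$ in $\calC^2([0,R])$. Combining $V(\cdot;\kappa_n)=\varphi^{(s_{\kappa_n})}_{\kappa_n}$ on $[0,\beta_{\kappa_n}]$ and $V(x;\kappa_n)=V(\beta_{\kappa_n};\kappa_n)+(x-\beta_{\kappa_n})$ on $[\beta_{\kappa_n},\infty)$ with the uniform-in-$x$ continuity of $\kappa\mapsto V(\cdot;\kappa)$ from Theorem \ref{thm_45} and the $x$-continuity of $V(\cdot;\kappa)$, one passes to the limit pointwise to obtain $V(x;\kappa)=\psi(x)$ for $x\le\beta^*$ and $V(x;\kappa)=\psi(\beta^*)+(x-\beta^*)$ for $x\ge\beta^*$. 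Comparing this with $V(\cdot;\kappa)=g_{\beta_\kappa}$ forces $\beta^*=\beta_\kappa$: if $\beta^*<\beta_\kappa$, some point of $(\beta^*,\beta_\kappa)$ would have $V'=1$, contradicting $g_{\beta_\kappa}'>1$ there; if $\beta^*>\beta_\kappa$, then $\psi'\equiv1$ on $(\beta_\kappa,\beta^*)$, so $\psi''\equiv0$ there and the ODE $\psi''+H(\cdot,\psi,\psi')=0$ yields $\varrho\psi=m-\tfrac{\sigma^2\kappa}{2}+f$ on that interval, whence $f(x)-f(y)=\varrho(x-y)$ there, contradicting Assumption \ref{as:f-lips} (this is exactly the argument of Proposition \ref{prop_44}). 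With $\beta^*=\beta_\kappa$: if $\beta_\kappa>0$ then $\psi=\varphi^{(s_\kappa)}_\kappa$ on $[0,\beta_\kappa]$ gives $s^*=\psi'(0)=s_\kappa$; if $\beta_\kappa=0$ then $g_{\beta_\kappa}(x)=x$ gives $s_\kappa=1$, while $s_{\kappa_n}-1=-\int_0^{\beta_{\kappa_n}}(\varphi^{(s_{\kappa_n})}_{\kappa_n})''$ and the uniform bound on $(\varphi^{(s_{\kappa_n})}_{\kappa_n})''$ force $s_{\kappa_n}\to1$, so again $s^*=1=s_\kappa$. Since every subsequence of $(s_{\kappa_n},\beta_{\kappa_n})$ has a further subsequence converging to $(s_\kappa,\beta_\kappa)$, the full sequences converge: $s_{\kappa_n}\to s_\kappa$ and $\beta_{\kappa_n}\to\beta_\kappa$. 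Taking $\kappa_n=\kappa+\delta_n$ with $\delta_n\to0$ yields \eqref{460}.

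\emph{Uniform continuity of $V'$.} For any $\kappa'\ge0$ one has $V'(x;\kappa')=(\varphi^{(s_{\kappa'})}_{\kappa'})'(x\wedge\beta_{\kappa'})$, so with $\kappa_n\to\kappa$ and $R=m/\delta+1$,
\[
\sup_{x\in\R_+}\big|V'(x;\kappa_n)-V'(x;\kappa)\big|\le\sup_{x\in[0,R]}\big|(\varphi^{(s_{\kappa_n})}_{\kappa_n})'(x)-(\varphi^{(s_\kappa)}_\kappa)'(x)\big|+\Big(\sup_{[0,R]}\big|(\varphi^{(s_\kappa)}_\kappa)''\big|\Big)\,|\beta_{\kappa_n}-\beta_\kappa|,
\]
and the right-hand side tends to $0$ by the joint continuous-dependence estimate (with $s'=s_{\kappa_n}\to s_\kappa$) together with $\beta_{\kappa_n}\to\beta_\kappa$; since this bound is uniform in $x$ and holds along every sequence $\kappa_n\to\kappa$, the map $\kappa\mapsto V'(\cdot;\kappa)$ is continuous uniformly in $x$, and the continuity of the minimizer's optimal control follows from $\xi^{V(\cdot;\kappa)}_t=-\kappa\sigma V'(X_t;\kappa)$ in \eqref{eq:Q}. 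I expect the main obstacle to be the identification $\beta^*=\beta_\kappa$ of subsequential limits — in particular the degenerate cases where $\beta_{\kappa_n}=0$ or $\beta_\kappa=0$ — which hinges on the non-degeneracy argument of Proposition \ref{prop_44}; the joint continuous-dependence estimate, although indispensable, is only a routine extension of Lemma \ref{lem_42}.
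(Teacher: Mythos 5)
Your proof is correct and takes a genuinely different route from the paper. The paper proceeds step by step: it first establishes continuity of the shooting parameter $s_\kappa = V'(0;\kappa)$ directly — via monotonicity of $\kappa\mapsto V(0;\kappa)=0$ and $\kappa\mapsto V(x;\kappa)$, the uniform bound on $V''$, and a contradiction argument using the uniform-in-$x$ continuity of $V$ — and only then propagates through the Grönwall estimate (its $l^{(\kappa)}$) to get continuity of $\beta_\kappa$ and finally of $V'$. You instead replace the forward chain by a compactness-plus-identification argument: extract subsequential limits $(s^*,\beta^*)$ of $(s_{\kappa_n},\beta_{\kappa_n})$ from the uniform bounds $s_\kappa\in[1,\tilde s]$ and $\beta_\kappa\in[0,m/\delta]$, use the joint $(s,\kappa)$ continuous-dependence estimate (a mild extension of Lemma \ref{lem_42} to compact intervals) to pass to a limiting trajectory $\psi=\varphi^{(s^*)}_\kappa$, match $\psi$ against $V(\cdot;\kappa)$ via Theorem \ref{thm_45}, and identify $\beta^*=\beta_\kappa$ and $s^*=s_\kappa$ by the structure and uniqueness results of Propositions \ref{prop_43}--\ref{prop_44} (the case $\beta^*>\beta_\kappa$ eliminated exactly by the $f'\equiv\varrho$ argument, the degenerate case $\beta_\kappa=0$ handled separately via the uniform second-derivative bound). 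This buys you a more symmetric treatment of $s_\kappa$ and $\beta_\kappa$ — you do not need a separate Step 1 to prove continuity of $s_\kappa$ before you can touch $\beta_\kappa$ — at the cost of a subsequence extraction and a case analysis at $\beta^*\ne\beta_\kappa$; the paper's version is more quantitative in flavor (it even yields a Lipschitz-type modulus for $l^{(\kappa)}$ in terms of $|V'(0;\kappa_1)-V'(0;\kappa_2)|+|\kappa_1-\kappa_2|$), whereas yours is cleaner as an abstract compactness-and-uniqueness argument. Both ultimately rest on the same three ingredients: the Grönwall continuous-dependence estimate, Theorem \ref{thm_45}, and the uniqueness content of Propositions \ref{prop_43}--\ref{prop_44}.
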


\begin{proof}
For ease of reading, we present the proof in four steps.

\vspace{0.1in}

\emph{Step 1: Continuity of $\kappa \mapsto V'(0;\kappa)$.}
Observe that, because $V(0;\kappa) = 0$ and $\kappa \mapsto V(x;\kappa)$ is decreasing (Theorem~\ref{thm_45}), we deduce that, if $\kappa_1 < \kappa_2$, then $V'(0;\kappa_1) \geq V'(0;\kappa_2)$.  Suppose $\kappa \mapsto V'(0;\kappa)$ is not continuous. Then, there exists $ 0 \leq \tilde \kappa < \frac{2m}{\sigma^2}$ (recall that if $\kappa \geq \frac{2m}{\sigma^2}$, then $\beta_{\kappa}=0$ with $V' \equiv 1$ on $\R_+$) such that
$$
a_1 := \liminf_{\ep \to 0^+} V'(0;\tilde \kappa - \ep) > \limsup_{\ep \to 0^+} V'(0; \tilde \kappa + \ep) =: a_2 \geq 1,
$$
since $V'(0; \kap) \geq 1$. It follows that, for all $0< \ep < \tilde \kappa \wedge (\frac{2m}{\sigma^2} - \tilde \kappa)$,
$$
V'(0; \tilde \kappa - \ep) > V'(0;\tilde \kappa + \ep) + \dfrac{a_1 -a_2}{2}.
$$
Because $V'(\cdot;\kappa) \leq \bar c$ (recall \ref{eq:der-bnd}) and $\beta_{\kappa} \leq \frac{m}{\delta}$ are uniformly bounded in $\kappa$, from \eqref{HJB}, \eqref{eq:H}, and \eqref{eq:phi_beta}, we get that $V''(\cdot;\kappa)$ is also uniformly bounded in $\kappa$.  From $V''(\cdot;\kappa)$'s uniform bound and the fact that $V(\cdot;\kappa) \in \mathcal{C}^2(\R_+)$, we now obtain that there exists $\bar \delta > 0$, independent of $\kappa$, such that for all $0< \ep < \tilde \kappa \wedge (\frac{2m}{\sigma^2} - \tilde \kappa)$,
$$
V'(x;\tilde \kappa - \ep) > V'(x; \tilde \kappa + \ep) + \dfrac{a_1 -a_2}{2}, \qquad \text{for all } x \in [0, \bar \delta].
$$
By integrating in $x = 0$ to $\bar \delta$, we obtain
$$
V(\bar \delta; \tilde \kappa - \ep) > V( \bar \delta ; \tilde \kappa + \ep) + \dfrac{a_1 - a_2}{2} \, \bar{\delta},
$$
for all $0< \ep < \tilde \kappa \wedge (\frac{2m}{\sigma^2} - \tilde \kappa)$.  Finally, by taking limit $\ep \to 0^+$, the continuity of $\kappa \mapsto V(\cdot; \kappa)$ implies
$$
V(\bar \delta; \tilde \kappa) \geq V(\bar \delta; \tilde \kappa) + \dfrac{a_1 -a_2}{2}  \, \bar{\delta},
$$
contradicting the continuity of $\kappa \mapsto V(x; \kappa)$.  Thus, we have shown $\kappa \mapsto V'(0;\kappa)$ is continuous.

\vspace{0.1in}

\emph{Step 2: Continuity of $\kappa \mapsto(l^{(\kappa)}(\cdot),(l^{(\kappa)})'(\cdot))$.}
Recall the definition of $\varphi^{(s)}$ given in \eqref{eq:g^(s)}, and recall that $\beta_{\kappa} \leq \frac{m}{\delta}$.  Let $l^{(\kappa)}(x)$ denote $\varphi^{(V'(0;\kappa))}(x)$ for $x\in[0,\frac{m}{\delta}]$. More explicitly,
\begin{align}\label{462}
	\begin{cases}
		(l^{(\kappa)})''(x)+H_F(x,l^{(\kappa)}(x),(l^{(\kappa)})'(x))=0,\qquad x\in[0,\frac{m}{\delta}],\\
		(l^{(\kappa)})(0)=0,\quad (l^{(\kappa)})'(0)=V'(0;\kappa),
	\end{cases}
\end{align}
with $\bar s = 2m/(\sig^2 \kap)$ replaced by the constant $\bar{c}$ in $F$, in conjunction with Remark~\ref{rem:F}.  For any $\kappa \ge 0$, the function $V(\cdot;\kappa)$ satisfies \eqref{eq:phi_beta} and, therefore, also \eqref{462} on $[0,\beta_{\kappa}]$. Uniqueness of the solution implies that 
\begin{align}\label{467}
	V(x;\kappa)=l^{(\kappa)}(x), \quad V'(x;\kappa)=(l^{(\kappa)})'(x),  \qquad x\in[0,\beta_{\kappa}].
\end{align}

We now show that the mapping $[0,\iy)\ni\kappa\mapsto(l^{(\kappa)}(x),(l^{(\kappa)})'(x))$ is continuous, uniformly in $x \in [0, \frac{m}{\delta}]$. Fix $\kappa_1,\kappa_2\in\R_+$. For simplicity of notation, let $f_i$ denote $l^{(\kappa_i)}$ for $i = 1, 2$; then,
\begin{align}\notag\label{463}
	f_1'(x)&=V'(0;\kappa_1)-\int_0^xH^{\kappa_1}_F(y,f_1(y),f_1'(y))dy, \\\notag
	f_2'(x)&=V'(0;\kappa_2)- \int_0^xH^{\kappa_2}_F(y,f_2(y),f_2'(y))dy \\
	&=V'(0;\kappa_2)  -\int_0^xH^{\kappa_1}_F(y,f_2(y),f_2'(y))dy+\int_0^x(\kappa_2-\kappa_1)F^2(f_2'(y)) dy,
\end{align}
in which the superscript $\kappa_i$ in $H^{\kappa_i}_F$ emphasizes $H_F$'s dependence on $\kappa_i$, for $i=1,2$.  Also, from \eqref{eq:F}, we know $|F(f_2'(y))| \le 2\bar{c}$.  From \eqref{ac05} and the expressions in \eqref{463}, it follows that there exists a constant $\tilde L>0$, independent of $\kappa_1$ and $\kappa_2$, such that 
\begin{align}\label{464}
	\big| f_1'(x)-f_2'(x) \big| &\le  \tilde L\left(\int_0^x\Big[ \big|f_1(y)-f_2(y) \big| + \big| f_1'(y)-f_2'(y) \big|\Big]dy +|\kappa_1-\kappa_2|\right) \notag \\
	&\quad + \big|V'(0;\kappa_1) - V'(0;\kappa_2) \big|.
\end{align}
Also, because $f_i(0)=0$ for $i = 1, 2$, we have,
\begin{align}\notag
	f_1(x)-f_2(x) = \int_0^x \big( f_1'(y)-f_2'(y) \big) dy,
\end{align}
which implies
\begin{equation}\label{464a}
|f_1(x)-f_2(x)|\le \int_0^x \big| f_1'(y)-f_2'(y) \big| dy.
\end{equation}
From inequalities \eqref{464} and \eqref{464a} and from Gr\"onwall's inequality, we deduce there is a constant $C>0$, independent of $\kappa_1$ and $\kappa_2$, such that
\begin{align}\notag
	\sup_{x \in [0, \frac{m}{\delta}]} \left( \big|f_1(x)-f_2(x)\big| + \big|f_1'(x)-f_2'(x)\big| \right)\le C\left( \big|V'(0;\kappa_1)-V'(0;\kappa_2)\big| + |\kappa_1-\kappa_2|\right).
\end{align}
Recalling from Step 1 that $\kappa\mapsto V'(\cdot;\kappa)$ is continuous, we obtain
\begin{equation}\label{eq:cont}
\lim_{|\kappa_1 - \kappa_2| \to 0^+}\sup_{x \in [0, \frac{m}{\delta}]} \left( \big|f_1(x)-f_2(x) \big| + \big|f_1'(x)-f_2'(x)\big| \right) = 0.
\end{equation}
We have, thus, shown that the mapping $[0,\iy)\ni\kappa\mapsto(l^{(\kappa)}(x),(l^{(\kappa)})'(x))$ is continuous, uniformly in $x \in [0, \frac{m}{\delta}]$. 

\vspace{0.1in}

\emph{Step 3: Continuity of $\kappa \mapsto \beta_{\kappa}$.}
Note that, from \eqref{467}, the definition of $\beta_\kappa$, the definition of $\beta^{(s)}$ in \eqref{ac09}, and recalling that $l^{(\kappa)}=\varphi^{(V'(0;\kappa))}$, it follows that for any $\kappa\in[0,\iy)$, $\beta_{\kappa}=\beta^{(V'(0;\kappa))}$. From the continuity of $\kappa\mapsto V'(0;\kappa)$ obtained in Step 1 and from the second inequality in \eqref{435}, we get
\begin{equation}\label{eq:beta-1}
\beta_{\kappa} \leq \liminf_{\ep \to 0} \beta_{\kappa+\ep}.
\end{equation}
From Theorem~\ref{thm_45} and the above considerations, we have
$$
\lim_{\ep \to 0} V(\cdot; \kappa+\ep) = V(\cdot; \kappa) \quad  \text{and} \quad \lim_{\ep \to 0} l^{(\kappa + \ep)}(\cdot) = l^{(\kappa)}(\cdot),
$$
uniformly on $[0, \frac{m}{\delta}]$.  In addition, \eqref{467} implies 
$$
V(x;\kappa+\ep) = l^{(\kappa+\ep)}(x), \qquad x \in [0, \beta_{\kappa+\ep}).
$$
By taking $\ep \to 0$, we get $V(x;\kappa) = l^{(\kappa)}(x)$ for $x \in [0, \limsup_{\ep \to 0} \beta_{\kappa+\ep})$. By continuity
$$
V(\cdot;\kappa) = l^{(\kappa)} \quad \text{for } x \in [0, \limsup_{\ep \to 0} \beta_{\kappa + \ep}].
 $$
 Note from Proposition~\ref{prop_44}, that $\beta_{\kappa} = \hat{\beta}_{\kappa}$, in which $\hat{\beta}_{\kappa}$ is given by \eqref{441}. Consequently,
 \begin{equation}\label{eq:beta-2}
 \limsup_{\ep \to 0}  \beta_{\kappa + \ep} \leq \beta_{\kappa}.
 \end{equation}
 From \eqref{eq:beta-1} and \eqref{eq:beta-2}, we obtain $\lim_{\ep \to 0} \beta_{\kappa + \ep}$ exists and equals $\beta_{\kappa}$.  Therefore, we have shown the continuity of $\kappa \mapsto \beta_{\kappa}$.
 
 \vspace{0.1in}
 
 \emph{Step 4: Continuity of $\kappa \mapsto V'(\cdot;\kappa)$.}
From \eqref{467}, we have $V'(x;\kappa+\ep)=(l^{(\kappa + \ep)})'(x)$ for $x\in[0,\beta_{\kappa+\ep}]$.  It follows from the continuity of $\kappa \mapsto (l^{(\kappa)})'(\cdot)$ obtained in Step 2 that
\begin{equation}\label{eq:V'-cont}
\lim_{\ep \to 0} V'(x;\kappa + \ep) = \lim_{\ep \to 0} (l^{(\kappa + \ep)})'(x) = (l^{(\kappa)})'(x) = V'(x;\kappa) \quad \text{for }x \in [0, \liminf_{\ep \to 0} \beta_{\kappa + \ep}].
\end{equation}
Recall $V'(\cdot;\kappa) \equiv 1$ on $[\beta_{\kappa}, \infty)$. From  the continuity of $\kappa \mapsto \beta_{\kappa}$, we now conclude, from \eqref{eq:V'-cont}, that $\kappa \mapsto V'(x; \kappa)$ is continuous, uniformly in $x \in \R_+$. 
\end{proof}

\footnotesize
\bibliographystyle{abbrv}
\bibliography{refs}
\end{document}